\documentclass{arxiv}

\usepackage{amssymb}
\usepackage{amsmath}
\usepackage{graphicx}  
\usepackage{epstopdf}
\usepackage{color}
\usepackage{bbold}
\usepackage{units}
\usepackage{algorithm}
\usepackage{enumitem}
\usepackage{pgfplots}
\usepackage{hyperref}

\newtheorem{theorem}{Theorem}

\newtheorem{lemma}{Lemma}
\newtheorem{remark}{Remark}

\newtheorem{proposition}{Proposition}
\newtheorem{definition}{Definition}
\newtheorem{example}{Example}

\newcommand{\tr}{\intercal}
\newcommand{\one}{\mathbf{1}}
\newcommand{\nf}{\mathrm{f}}

\newcommand{\nv}{\mathrm{v}}
\newcommand{\cvx}[1]{\mathrm{conv}\left(#1\right)}

\newcommand{\interior}[1]{\mathbf{int}\left(#1\right)}
\newcommand{\defeq}{:=}

\newcommand{\Poly}{\mathbb P}

\newcommand{\diam}{\mathrm{diam}}

\begin{document}

\begin{frontmatter}

\title{On Universal Minkowski Classes of Polytopes: \\ Theory and Applications}

\author[ST,IMT]{Boris Houska},
\author[LUH]{Matthias A.~M\"uller},
\author[IMT]{Mario Eduardo Villanueva}
\address[ST]{ShanghaiTech University, China}
\address[LUH]{Leibniz University Hannover, Germany}
\address[IMT]{IMT School for Advanced Studies Lucca, Italy}

\begin{keyword}
Linear Systems, Polytopic Control, Convex Optimization
\end{keyword}

\begin{abstract}
This paper establishes the existence, in every dimension, of universal Minkowski classes of configuration-constrained polytopes, with direct
applications to set-theoretic control. For any prescribed accuracy, such a class contains an approximation of every compact convex set to that accuracy and admits a jointly affine parameterization of facets and vertices. The construction is explicit and relies on a combinatorial averaging procedure that produces minimal configuration enclosures. In control, these polytopic templates enable convex reformulations of otherwise non-convex problems, including invariant-set computation and control Lyapunov function design. Tutorial examples illustrate both the geometric construction and its control applications.
\end{abstract}

\end{frontmatter}

\section{Introduction}

Set-theoretic methods are widely used in the analysis and control of constrained dynamical systems~\cite{Blanchini2008}. Many of these methods rely on polyhedral computing~\cite{Fukuda2020,Houska2025}, since polytopes admit finite-dimensional representations, interact naturally with linear and convex optimization, and can approximate arbitrary compact convex sets to any prescribed accuracy~\cite{Schneider1981}. In principle, modern optimization software makes it possible to manipulate polytopes with very large numbers of facets or vertices. Nevertheless, the optimization of a polytope itself remains difficult whenever both its facet and vertex representations enter the problem simultaneously.

This difficulty is fundamental in several set-theoretic control problems. Examples include the computation of robust control invariant sets, the construction of piecewise affine control Lyapunov functions, and the optimization of polytopic tubes in model predictive control~\cite{Gutman1986,Gutman1987,Villanueva2024,HouskaCLF2026,Houska2025}. In such problems, one would often like to parameterize a candidate polytope through its facet offsets, while dynamical conditions must be imposed at its vertices. However, the vertices generally depend nonlinearly and combinatorially on the facet offsets. Consequently, even when the underlying system is linear, the resulting optimization problem is typically non-convex.

\input{figures/fig1}

For example, consider the computation of a robust control invariant set for a discrete-time linear system subject to additive disturbances. A natural candidate set can be described by fixing its facet normals and optimizing the corresponding offsets. The invariance conditions, however, must be verified at the vertices of the candidate polytope, whose locations and incidence structure vary with the offsets. The feasible set of offsets is therefore generally non-convex. For uncontrolled systems, this difficulty can sometimes be circumvented through specialized constructions~\cite{Trodden2016}; in the controlled case, the simultaneous dependence on facets, vertices, and control actions remains a central obstacle.

Configuration-constrained polytopes provide a way to separate this geometric difficulty from the subsequent control optimization~\cite{Villanueva2024,HouskaCLF2026}. Their defining feature is that the relevant facet--vertex incidence structure is fixed in advance. Within a prescribed configuration, both the facet offsets and the vertex coordinates admit a joint affine parameterization. As a result, conditions involving both representations can be expressed through convex constraints. Once an appropriate configuration has been selected, a number of otherwise non-convex set-theoretic control problems can therefore be reformulated as convex optimization problems.

This observation raises a fundamental approximation question. Fixing the configuration improves tractability, but it also restricts the admissible polytope geometries. It is therefore not immediate whether configuration-constrained polytopes retain enough flexibility to approximate arbitrary compact convex sets. Without such an approximation property, the restriction to a prescribed configuration could introduce an irreducible geometric conservatism, independently of the efficiency with which the resulting optimization problem is solved.

The main purpose of this paper is to resolve this question. In particular, it resolves the geometric approximation issue left open by the configuration-constrained tube-MPC and control-Lyapunov-function constructions in~\cite{Villanueva2024,HouskaCLF2026}. We establish the existence, in every dimension, of \emph{universal Minkowski classes} of configuration-constrained polytopes. More precisely, for every prescribed accuracy, we show that there exists a Minkowski class whose members admit a joint affine facet--vertex parameterization and approximate every compact convex set of unit diameter to within that accuracy. Thus, the geometric flexibility required for universal approximation is compatible with the fixed combinatorial structure needed for convex optimization.

\subsection{Background and Related Work}

The mathematical foundations of polyhedral theory originate in the work of Minkowski~\cite{Minkowski1989}, Weyl~\cite{Weyl1934}, Carath\'eodory~\cite{Caratheodory1911}, and Farkas~\cite{Farkas1902}, among others. Subsequent developments revealed deep connections between convex polytopes, linear programming, combinatorics, and algebraic geometry~\cite{Gale1960,Dantzig1963,McMullen1989}. Standard references on the geometric and combinatorial theory of polytopes include the monographs by Gr\"unbaum~\cite{Gruenbaum1967} and Ziegler~\cite{Ziegler1995}.

From a computational perspective, the conversion between facet and vertex representations is a central operation in polyhedral computing~\cite{Graham1990,Fukuda2020}. Classical approaches include the double-description method~\cite{Motzkin1953}, beneath-and-beyond procedures~\cite{Seidel1981}, and later refinements for vertex and facet enumeration~\cite{Fukuda1996}. Reverse-search methods introduced by Avis and co-workers provide an alternative framework for enumerating polyhedral objects while controlling memory requirements~\cite{Avis1992,Avis1996}. These tools are highly developed, but their purpose is generally to enumerate or manipulate the geometry of a given polyhedron. They do not, by themselves, remove the non-convex dependence between the facets and vertices of a polytope whose geometry is being optimized.

Polyhedral methods have played a similarly important role in control. Gutman and Cwickel derived foundational vertex-based characterizations of control invariance for constrained linear systems~\cite{Gutman1986,Gutman1987}, while Bitsoris developed related algebraic conditions for invariant polytopes~\cite{Bitsoris1988}. These results form part of the broader theory of set invariance developed and surveyed by Blanchini~\cite{Blanchini1999b,Blanchini2008}. More recently, Trodden showed that minimal robust positively invariant polytopes can be computed through convex optimization for an important class of uncontrolled uncertain systems~\cite{Trodden2016}. Polyhedral techniques are also central to the analysis of parametric optimization problems arising in model-based control~\cite{Jones2005,Herceg2013}.

In model predictive control~\cite{Rawlings2015}, polytopes are routinely used to describe state and input constraints, terminal sets, reachable sets, and uncertainty tubes. Different restrictions on the geometry of the tube cross-sections lead to rigid, homothetic, elastic, and configuration-constrained tube MPC formulations~\cite{Langson2004,Rakovic2013,Rakovic2016,Villanueva2024}. A recent survey of polyhedral methods in control, including their use in tube MPC and set-theoretic design, is provided in~\cite{Houska2025}. The present paper complements this literature by addressing a prior geometric question: whether the tractable class of configuration-constrained polytopes can, in principle, retain the universal approximation capabilities of unrestricted polytopes.

\subsection{Contributions and Scope}

The primary contribution of this paper is a foundational result in convex and polyhedral geometry. We prove that, for every dimension $d\geq 1$ and every accuracy $\epsilon>0$, there exists a Minkowski class of configuration-constrained $d$-polytopes that is $\epsilon$-dense in the family of compact convex subsets of $\mathbb R^d$ with diameter~$1$. This result, stated in Theorem~\ref{thm::dense}, shows that fixing a suitable facet--vertex configuration does not preclude approximation to arbitrary prescribed accuracy.

The proof is constructive. In particular, we introduce a combinatorial averaging procedure that generates minimal configuration enclosures and thereby produces the universal Minkowski classes used in the approximation theorem. The construction is presented in Algorithm~\ref{alg:me} and analyzed in Theorem~\ref{thm::enclosure}. Its role in this paper is primarily theoretical: it provides a finite and systematic construction establishing the existence of the required classes in every dimension. The procedure is deliberately general and is not intended as an optimized implementation. Its computational cost can be substantial, particularly in dimensions $d\geq 3$, and the development of specialized algorithms and data structures is left for future work.

The relevance to set-theoretic control arises after this offline construction has been completed. The resulting configuration-constrained templates combine two properties that are usually in tension: they admit a joint affine parameterization of facets and vertices, and they can approximate arbitrary compact convex sets to any prescribed accuracy. The first property enables convex reformulations of problems involving simultaneous facet and vertex constraints, while the second ensures that the use of such templates need not impose an intrinsic approximation barrier. In particular, the templates can be used in convex formulations for robust control invariant-set computation, control Lyapunov function design, and related polytopic optimization problems.

The construction is performed once, offline, for a given dimension and approximation accuracy. The resulting templates can then be stored and reused across different systems and control-design problems of the same dimension. Accordingly, the main contribution is not a new online control algorithm, nor a claim that the general construction is already computationally optimal. Rather, it is the constructive existence of universal configuration-constrained Minkowski classes and, through this result, a theoretical justification for using jointly parameterized polytopic templates in convex set-theoretic control formulations. The main developments are illustrated by means of tutorial examples.

\subsection{Notation}

Throughout this article, the standard terminology for sets is used. In particular, we use the following symbols:
\begin{itemize}
\item $\mathbb R^d$ denotes the $d$-dimensional Euclidean space, and
\item $\mathfrak C^d$ denotes the set of nonempty compact convex sets in $\mathbb R^d$.
\end{itemize}
Additionally, the notion of partially ordered sets (posets) is used to describe the structure of set collections, such as the faces of a polyhedron. We use the standard notational conventions as in~\cite[Def.~2.5]{Ziegler1995}. The Hausdorff distance, diameter, and Minkowski sum of $\mathbb X,\mathbb Y\in\mathfrak C^d$ are defined, as usual, by
\begin{align*}
d_\mathrm{H}(\mathbb X, \mathbb Y) &\defeq \max \left\{ \max_{x \in \mathbb X} \min_{y \in \mathbb Y} \| x-y\|, \max_{y \in \mathbb Y} \min_{x \in \mathbb X} \| x-y\| \right\}, \\[0.16cm]
\diam(\mathbb X) &\defeq \max_{x,y \in \mathbb X} \| x-y \|, \quad \text{and} \\[0.16cm]
\mathbb X+ \mathbb Y &\defeq \{ \ x+y \ \mid \ x \in \mathbb X, \ y \in \mathbb Y \ \}.
\end{align*}
Moreover, we recall the following standard definitions of Minkowski classes, see also~\cite[Chap.~3.5]{Schneider2013}.
\begin{definition}
\label{def::MinkowskiClass}
A set of compact convex sets $\mathcal{X}\subseteq\mathfrak{C}^d$ is
called a Minkowski class if it is closed in the Hausdorff metric and
satisfies
\[
    X+Y\in\mathcal{X}
    \qquad\text{and}\qquad
    \alpha X\in\mathcal{X}
\]
for all $X,Y\in\mathcal{X}$ and all $\alpha\geq 0$.
\end{definition}
Examples of Minkowski classes include the set $\mathfrak C^d$ itself, the class of centrally symmetric compact convex sets, and the set of zonoids; that is, the set of sets that can be approximated by a sum of line segments~\cite{Schneider1983}.
\begin{definition}
\label{def::dense}
Let $\mathfrak X,\mathfrak Y\subseteq\mathfrak C^d$ be two given set collections with $\mathfrak X \subseteq \mathfrak Y$. We say that $\mathfrak X$ is $\epsilon$-dense in $\mathfrak Y$, for a given $\epsilon > 0$, if
\[
\forall \mathbb Y \in \mathfrak Y, \ \exists \mathbb X \in \mathfrak X: \quad d_\mathrm{H}( \mathbb X, \mathbb Y) \ \leq \ \epsilon \cdot \diam( \mathbb Y).
\]
\end{definition}
The latter definition is important for analyzing generic numerical set approximation methods. Namely, in the context of approximating a set $\mathbb Y \in \mathfrak Y$ with a set $\mathbb X \in \mathfrak X$, the constant $\epsilon$ in the above definition can be interpreted as a relative numerical approximation accuracy. Alternatively, if one scales the set $\mathbb Y$ such that $\mathrm{diam}(\mathbb Y) = 1$, the constant $\epsilon$ can be interpreted as the absolute approximation accuracy.

Finally, the symbol $\one$ denotes an all-ones vector whose dimension is clear from context. For vectors $a_1,a_2,\ldots,a_n \in \mathbb R^d$, we use the notation
\[
\cvx{a_1,a_2,\ldots,a_n} \ \defeq \ \left\{ \sum_{i=1}^n \theta_i a_i \middle|
\ \theta \geq 0, \ \mathbf{1}^\tr \theta = 1 \right\}
\]
to denote the convex hull. This notation is also used for matrices, where $\cvx{A}$ denotes the convex hull of the columns of $A \in \mathbb R^{d \times n}$.

\section{Polytopes}
The goal of this section is to introduce the basic definitions and properties of general polytopes and their face configuration.

\subsection{Faces of Polytopes}
\label{sec::polytopes}
Throughout this paper, we are interested in polytopes of dimension $d$ with at most $n$ facets. Let
\[
\mathbb A(n,d) \, \defeq \, \left\{ \, A \in \mathbb R^{n \times d} \, \middle| \,
\begin{array}{l}
\forall i \in \{ 1, \ldots, n \}, \ A_i \neq 0, \\
0 \in \interior{\cvx{A^\tr}}, \\
\forall i,j\in\{1,\ldots,n\},\,  i\neq j,\\
\| A_i \| A_j \neq \| A_j \| A_i
\end{array}
\,\right\}
\]
denote the corresponding set of admissible facet matrices. The conditions require every row of $A$ to be nonzero and exclude positively collinear distinct rows. These restrictions involve no loss of generality when facet directions are prescribed, because zero normals and repeated positive directions can be removed. In this context, a polytope with facet matrix \(A \in \mathbb A(n,d)\) is a
(potentially empty) set of the form
\[
\Poly(A,b) \ \defeq \ \{ \ x \in \mathbb R^d \ \mid \ A x \leq b \ \},
\]
for some vector \(b \in \mathbb R^{n}\). In this context, the condition $0 \in \interior{\cvx{A^\tr}}$ in the definition of $\mathbb A(n,d)$ is exactly the boundedness condition: it says that the facet normals $A_1,\dots,A_n$ positively span $\mathbb R^d$, or equivalently that the recession cone $\{x\in\mathbb R^d \mid Ax\le 0\}$ is \(\{0\}\). Thus, $\Poly(A,b)$ has no nonzero direction in which it can be unbounded; whenever
it is nonempty, it is a bounded polyhedron, hence a polytope. As usual, we associate with each index set $\mathbb I \subseteq \{ 1, \ldots, n \}$ a face of $\Poly(A,b)$, defined by
\[
\mathbb F( \mathbb I,A,b ) \ \defeq \ \left\{ \ x \in \Poly(A,b) \ \middle| \ \forall i \in \mathbb I, \ A_i x = b_i \ \right\}.
\]
Faces of dimensions $0$, $1$, $k$, and $d-1$ are called vertices, edges, $k$-faces, and facets. The number of $k$-faces of $\Poly(A,b)$ is denoted by $\nf_k(A,b)$. They are the components of the so-called $\nf$-vector. Moreover, we recall that the faces of a polytope are a bounded and graded poset with respect to inclusion. It is called the face lattice of $\Poly(A,b)$. In particular, we have
\[
\mathbb I \ \subseteq \ \mathbb J \qquad \Longrightarrow \qquad \mathbb F(\mathbb I, A,b) \ \supseteq \ \mathbb F( \mathbb J, A,b).
\]
Generally, two polytopes are called combinatorially equivalent if their face lattices are isomorphic, see also~\cite[Chap.~2.2]{Ziegler1995} for an in-depth discussion. Similarly, the following definitions are commonly used in almost all the literature on polytopes, see~\cite{Gruenbaum1967} and~\cite{Ziegler1995} for details.

\begin{definition}
\label{def::simple}
A polytope in $\mathbb R^d$ is called simple if all of its vertices are in exactly $d$ facets.
\end{definition}

\begin{definition}
\label{def::minimal}
The facet representation $\mathbb P = \Poly(A,b)$ with $A \in \mathbb A(n,d)$ is called minimal, if $\nf_{d-1}(A,b) = n$.
\end{definition}

We further recall that if $\Poly(A,b)$ is a minimal representation of a simple polytope, then the polytopes $\Poly(A,c)$ and $\Poly(A,b)$ are combinatorially equivalent for all vectors $c$ in a sufficiently small open neighborhood of $b$, see~\cite[Section~4.5]{Gruenbaum1967},~\cite[Section~2.5]{Ziegler1995}, and compare also~\cite[Cor.~2]{Villanueva2024}. In other words, simple polytopes are locally configuration stable.

\subsection{\texorpdfstring{Configuration Posets}{Configuration Posets}}
\label{sec::configurations}
Throughout, we assume that $A \in \mathbb A(n,d)$ is a given admissible facet matrix. We associate with each index set $\mathbb I \subseteq \{ 1, \ldots, n \}$ a nonempty convex cone,
\begin{align}
\mathbb C(\mathbb I, A) & \ \defeq \ \left\{
\ b \in \mathbb R^n \ \middle|
\  \begin{array}{l}
\exists x \in \mathbb R^d: \ \forall i \in \mathbb I, \\
A x \leq b, \ A_i x = b_i
\end{array} \
\right\}. \notag 
\end{align}
In words, $\mathbb C(\mathbb I, A)$ denotes the set of all parameters $b$ for which the face $\mathbb F(\mathbb I,A,b)$ is nonempty. Note that $\mathbb C(\mathbb I,A)$ is a closed polyhedral cone, whose lineality space is given by
\[
\operatorname{lin}(\mathbb C(\mathbb I,A))=\mathrm{Im}(A).
\]
To verify the inclusion
$\operatorname{lin}(\mathbb C(\mathbb I,A))\subseteq \mathrm{Im}(A)$, pick any $v\in \operatorname{lin}(\mathbb C(\mathbb I,A))$, such that $v\in \mathbb C(\mathbb I,A)$ and $-v\in \mathbb C(\mathbb I,A)$.
Then we can write
\[
v=A x+s \quad \text{and} \quad -v=A y+t
\]
for some $s,t\geq 0$. Adding these two identities gives
\[
0=A(x+y)+s+t.
\]
Choosing $w\in\mathbb R_{++}^n$ with $A^\tr w=0$, which exists because $0\in\interior{\cvx{A^\tr}}$, yields $w^\tr(s+t)=0$ and, hence, $s=t=0$. Therefore, $v\in\mathrm{Im}(A)$; see also~\cite{Houska2025}. Conversely, if $v\in\mathrm{Im}(A)$, say $v=A x$, then
$v\in\mathbb C(\mathbb I,A)$ via $x$ and
$-v\in\mathbb C(\mathbb I,A)$ via $-x$. Hence
$v\in\operatorname{lin}(\mathbb C(\mathbb I,A))$. Therefore,
\[
\operatorname{lin}(\mathbb C(\mathbb I,A))=\mathrm{Im}(A).
\]
Moreover, the set $\mathbb C(\varnothing, A)$ corresponds to the set of parameters $b$ for which $\Poly(A,b) \neq \varnothing$. Additionally, we introduce the shorthand
\[
\overline{\mathbb C}(A) \ \defeq \ \bigcap_{i \in \{ 1, \ldots, n \}} \ \mathbb C( \{ i\}, A ).
\]
The set $\overline{\mathbb C}(A)$ is a convex cone and is the natural parameter domain for nonempty polytopes. Indeed, if $\Poly(A,b)\neq\varnothing$, the support vector $b'\in\mathbb R^n$ defined by
\[
b'_i \ \defeq \ \max_{x\in\Poly(A,b)}A_i x,\qquad i\in\{1,\ldots,n\},
\]
belongs to $\overline{\mathbb C}(A)$ and satisfies $\Poly(A,b)=\Poly(A,b')$.
These cones can be used to analyze polytope configurations through the index-set collection
\[
\mathfrak I(A,b) \ \defeq \ \{ \ \mathbb I \subseteq \{ 1, \ldots, n \} \ \mid \ \mathbb F( \mathbb I, A,b) \neq \varnothing \ \}.
\]
In words, $\mathfrak I(A,b)$ denotes the collection of all index sets that correspond to the nonempty faces of a given polytope $\Poly(A,b)$.
\begin{definition}
\label{def::cccone}
Let $A \in \mathbb A(n,d)$ and $b \in \overline{\mathbb C}(A)$ be given. The configuration cone of a polytope $\Poly(A,b)$ is defined as
\[
\mathbb K(A,b) \ \defeq \ \bigcap_{\mathbb I \in \mathfrak I(A,b)} \mathbb C(\mathbb I, A).
\]
Additionally, two polytopes $\Poly(A,b)$ and $\Poly(A,c)$ with parameters $b,c \in \overline{\mathbb C}(A)$ are called configuration-equivalent for the given matrix $A$, if $\mathbb K(A,b) = \mathbb K(A,c)$.
\end{definition}

\begin{remark}
\label{rem::normal-fans}
Let $\mathbb P\subseteq\mathbb R^d$ be a polytope and let $\mathbb F$ be a nonempty face of $\mathbb P$. Its normal cone is
\[
\left\{p\in\mathbb R^d\ \middle| \forall x\in\mathbb P, \ \forall y \in \mathbb F, \
p^\tr(x-y)\leq0 \ 
\right\}.
\]
The collection of the normal cones of all faces is the normal fan. A fan $\mathcal N_1$ refines a fan $\mathcal N_2$ if every cone of $\mathcal N_1$ is contained in a cone of $\mathcal N_2$; equivalently, $\mathcal N_2$ is a coarsening of $\mathcal N_1$. Thus, refinement subdivides normal cones, whereas coarsening merges adjacent normal cones. For fixed facet-normal labels, parameters in the relative interior of one configuration cone have the same labeled normal fan. Moving to its boundary can merge normal cones, create additional facet incidences, make inequalities redundant, or produce a lower-dimensional degeneration. These notions are standard in the deformation- and type-cone literature; see~\cite{Ziegler1995,Schneider2013},~\cite[Sec.~1.1 and Cor.~1.42]{FillastreIzmestiev2017}, and~\cite{Houska2025}.
\end{remark}

In the terminology of Remark~\ref{rem::normal-fans}, if $\Poly(A,b)$ is minimally represented, then $c\in\mathbb K(A,b)$ precisely when the labeled normal fan of $\Poly(A,b)$ refines that of $\Poly(A,c)$. The relative interior of $\mathbb K(A,b)$ therefore consists of parameters with the same labeled normal fan, whereas its boundary contains coarsenings, including cases with redundant inequalities or lower-dimensional degenerations. This is the configuration-cone form of the classical deformation/type-cone description; see~\cite[Thms.~8 and~10]{McMullen1973},~\cite[Cor.~1.42, Lem.~1.43, and Rem.~1.45]{FillastreIzmestiev2017}, and~\cite{Houska2025}.
Configuration-equivalent polytopes are necessarily combinatorially equivalent, but the converse need not hold; see Example~\ref{sec::examples1}. Moreover, configuration-equivalence is label-dependent. For example, if $\tilde A$ is obtained by permuting the rows of $A$, then $\Poly(\tilde A,\one)=\Poly(A,\one)$, although the indexed facet representation has changed. This dependence is natural in polyhedral computation, where the rows of the facet matrix and the corresponding face-index lists are stored explicitly.

The family of configuration cones is closed under finite intersections. In the type-cone description, the intersection of two chamber closures is either empty or a common face; whenever it is nonempty, that face is itself represented by the incidences active at a parameter in its relative interior. Thus,
\[
\mathbb K(A,b) \cap \mathbb K(A,c)
\]
is again a configuration cone. This construction and the definition below follow the type-cone-complex viewpoint of~\cite{McMullen1973}.
\begin{definition}
\label{def::cclattice}
The poset of configuration cones associated with a matrix $A \in \mathbb A(n,d)$,
\[
\mathfrak K(A) \ \defeq \ \left\{ \ \mathbb K(A,b) \ \middle| \ b \in \overline{\mathbb C}(A) \ \right\},
\]
partially ordered with respect to inclusion, is called the configuration poset of $A$.
\end{definition}
The poset $\mathfrak K(A)$ is finite, because only finitely many collections $\mathfrak I(A,b)$ can occur. Every configuration cone is a closed polyhedral cone with lineality space $\mathrm{Im}(A)$. Moreover, $\mathrm{Im}(A)$ is the unique minimal element: every configuration cone contains it, while $\mathbb K(A,Ay)=\mathrm{Im}(A)$ for every $y\in\mathbb R^d$. Here, $\dim(\mathrm{Im}(A))=d$ because $0\in\interior{\cvx{A^\tr}}$ implies $\operatorname{rank}(A)=d$. In contrast to a lattice, $\mathfrak K(A)$ may have several maximal elements. We denote their number by $\sigma(A)\in\mathbb N$.

Every maximal element $\mathbb M_i$ has nonempty interior and can be written as
\[
\mathbb M_i=\mathbb K(A,b_i),\qquad b_i\in\interior{\mathbb M_i},\qquad i\in\{1,\ldots,\sigma(A)\}.
\]
The representative $b_i$ can be chosen so that $\Poly(A,b_i)$ is a simple polytope with a minimal facet representation. These standard deformation/type-cone properties follow from~\cite{McMullen1973,FillastreIzmestiev2017,Villanueva2024,Houska2025}.

\subsection{Configuration-Constrained Polytopes}
Configuration-constrained polytopes were originally introduced in~\cite{Villanueva2024}. Using the notation of the current paper, they are defined as follows.

\begin{definition}
Let $A \in \mathbb A(n,d)$ be a given facet matrix. For any $b \in \overline{\mathbb C}(A)$, the associated class of configuration-constrained polytopes is
\[
\mathfrak P(A,b) \ \defeq \ \left\{ \ \Poly(A,c) \ \mid \ c \in \mathbb K(A,b) \ \right\}.
\]
\end{definition}
The class $\mathfrak P(A,b)$ is nonempty because $b\in\mathbb K(A,b)$. It may, however, contain no simple polytopes unless $b$ represents a maximal element of $\mathfrak K(A)$. Regardless of simplicity, $\mathfrak P(A,b)$ admits a double description: there exist vertex matrices $V_1,V_2,\ldots,V_{\nv}\in\mathbb R^{d\times n}$ such that
\begin{equation}\label{eq::vertices}
\mathfrak P(A,b) \ = \ \left\{ \ \cvx{V_1 c, \ldots,V_{\nv} c} \ \mid \ c \in \mathbb K(A,b) \ \right\}.
\end{equation}
This was proved for maximal configuration cones in~\cite[Thm.~2]{Villanueva2024} and extended to arbitrary configuration cones in~\cite[Prop.~1]{Houska2024}.

The following technical lemma summarizes a simple but relevant consequence of these results.

\begin{lemma}
\label{lem::MinkowskiClass}
Let $A \in \mathbb A(n,d)$ and $b \in \overline{\mathbb C}(A)$ be given. Then $\mathfrak P(A,b)$ is a Minkowski class (see Definition~\ref{def::MinkowskiClass}).
\end{lemma}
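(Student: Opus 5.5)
We would prove the lemma directly from the double description~\eqref{eq::vertices}. The guiding observation is that the map $c\mapsto \cvx{V_1c,\ldots,V_{\nv}c}$ is positively homogeneous and additive on the convex cone $\mathbb K(A,b)$. We would then verify the three defining properties of Definition~\ref{def::MinkowskiClass}: closure under nonnegative scaling, closure under Minkowski sums, and closedness in the Hausdorff metric.

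\emph{Scaling.} Let $X=\Poly(A,c)$ with $c\in\mathbb K(A,b)$ and let $\alpha\geq 0$. For $\alpha>0$ we have $\alpha X=\Poly(A,\alpha c)$, and $\alpha c\in\mathbb K(A,b)$ because $\mathbb K(A,b)$ is a cone. For $\alpha=0$ we have $0\cdot X=\{0\}=\Poly(A,0)$, and $0\in\mathrm{Im}(A)\subseteq\mathbb K(A,b)$.

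\emph{Sums.} Let $X=\Poly(A,c)$ and $Y=\Poly(A,c')$ with $c,c'\in\mathbb K(A,b)$. By~\eqref{eq::vertices},
\[
X=\cvx{V_1c,\ldots,V_{\nv}c},\qquad Y=\cvx{V_1c',\ldots,V_{\nv}c'}.
\]
Every point of $X+Y$ is a convex combination of the points $V_ic+V_jc'$. The natural candidate is $X+Y=\Poly(A,c+c')=\cvx{V_i(c+c')}$, and we establish it by two inclusions. First, $\Poly(A,c)+\Poly(A,c')\subseteq\Poly(A,c+c')$ holds trivially: if $Ax\le c$ and $Ay\le c'$, then $A(x+y)\le c+c'$. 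Second, $c+c'\in\mathbb K(A,b)$ by convexity of the cone, so~\eqref{eq::vertices} gives $\Poly(A,c+c')=\cvx{V_i(c+c')}_{i}$. Each vertex $V_i(c+c')=V_ic+V_ic'$ lies in $X+Y$, and $X+Y$ is convex, so $\Poly(A,c+c')\subseteq X+Y$. Hence $X+Y=\Poly(A,c+c')\in\mathfrak P(A,b)$. Note that this argument relies on~\eqref{eq::vertices} holding for \emph{every} $c\in\mathbb K(A,b)$ with the same matrices $V_i$, which is exactly the cited result.

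\emph{Closedness.} This is the step we expect to be the main obstacle, since Hausdorff convergence of sets must be converted into convergence of parameters. Let $X_k=\Poly(A,c_k)\to X$ in the Hausdorff metric, with $c_k\in\mathbb K(A,b)$.

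The first move is to replace each $c_k$ by the support vector $h_k$ with $(h_k)_i=\max_{x\in X_k}A_ix$. To keep this replacement inside the cone, we argue from the vertex representation: every $V_jc_k$ lies in $X_k$, and each facet of the configuration contains a vertex $V_jc_k$ attaining equality $A_iV_jc_k=(c_k)_i$. This is what the configuration structure encodes, because $\mathbb F(\{i\},A,c)\neq\varnothing$ on $\mathbb K(A,b)\subseteq\overline{\mathbb C}(A)$. Consequently $c_k$ already equals the support vector, $c_k=h_k$.

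Support functions are continuous with respect to the Hausdorff metric, so $c_k\to c$, where $c_i=\max_{x\in X}A_ix$. The cone $\mathbb K(A,b)$ is closed, being a finite intersection of closed polyhedral cones, so $c\in\mathbb K(A,b)$. Finally, the map $c\mapsto\cvx{V_jc}$ is continuous in the Hausdorff metric, so $X=\lim_k\cvx{V_jc_k}=\cvx{V_jc}=\Poly(A,c)\in\mathfrak P(A,b)$.

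The delicate point in this last part is the claim that $c\in\mathbb K(A,b)$ coincides with the support vector of $\Poly(A,c)$, that is, that no inequality is slack. We would justify it via $\mathbb K(A,b)\subseteq\overline{\mathbb C}(A)$, which holds because $\{i\}\in\mathfrak I(A,b)$ for all $i$ since $b\in\overline{\mathbb C}(A)$.
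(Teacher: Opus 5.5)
Your proposal is correct and follows essentially the same route as the paper: Hausdorff closedness via support vectors (using $\mathbb K(A,b)\subseteq\overline{\mathbb C}(A)$), closedness of $\mathbb K(A,b)$ and the common vertex matrices; the elementary inclusion plus the shared vertex representation for Minkowski sums; and positive homogeneity of the cone for scaling. Your explicit handling of $\alpha=0$ via $\Poly(A,0)=\{0\}$ is a small additional detail that the paper leaves implicit.
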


\textit{Proof.} We first verify closedness. Let $\Poly(A,c^\ell)\in\mathfrak P(A,b)$ converge in the Hausdorff metric to $\mathbb P\in\mathfrak C^d$. Since $c^\ell\in\mathbb K(A,b)\subseteq\overline{\mathbb C}(A)$, every row of $A$ is supporting and
\[
c_i^\ell=\max_{x\in\Poly(A,c^\ell)}A_i x,\qquad i\in\{1,\ldots,n\}.
\]
Continuity of support functions under Hausdorff convergence implies $c^\ell\to c$, where $c_i=\max_{x\in\mathbb P}A_i x$. The cone $\mathbb K(A,b)$ is closed, so $c\in\mathbb K(A,b)$. Passing to the limit in the common vertex representation~\eqref{eq::vertices} yields $\mathbb P=\Poly(A,c)\in\mathfrak P(A,b)$.

It remains to prove closure under Minkowski addition and nonnegative
scaling. Let $c_1,c_2 \in \mathbb K(A,b)$. Since $\mathbb K(A,b)$ is a convex cone, $c_1+c_2\in\mathbb K(A,b)$. For every $i\in\{1,\ldots,n\}$,
\begin{eqnarray*}
\max_{x \in \Poly(A,c_1) + \Poly(A,c_2)} A_i x &\leq& \max_{x_1 \in \Poly(A,c_1)} A_i x_1 + \max_{x_2 \in \Poly(A,c_2)} A_i x_2 \\
&=& (c_1+c_2)_i,
\end{eqnarray*}
which implies
\[
\Poly(A,c_1) + \Poly(A,c_2) \ \subseteq \ \Poly(A,c_1+c_2).
\]
Conversely, the common vertex matrices in~\eqref{eq::vertices} imply that every $x\in\Poly(A,c_1+c_2)$ can be written as
\begin{eqnarray*}
x &=& \sum_{i=1}^{\nv}\lambda_iV_i(c_1+c_2) \\
&=& \sum_{i=1}^{\nv}\lambda_iV_ic_1+\sum_{i=1}^{\nv}\lambda_iV_ic_2
\ \in\ \Poly(A,c_1)+\Poly(A,c_2),
\end{eqnarray*}
for suitable $\lambda_i\geq0$ with $\sum_i\lambda_i=1$. Hence,
\[
\Poly(A,c_1)+\Poly(A,c_2)=\Poly(A,c_1+c_2).
\]
Finally, since $\mathbb K(A,b)$ is a cone, $\alpha c\in \mathbb K(A,b)$ for every
$c\in \mathbb K(A,b)$ and every $\alpha\geq 0$. The same representation
therefore gives
\[
    \alpha \Poly(A,c)=\Poly(A,\alpha c)\in\mathfrak{P}(A,b),
\]
which proves closure under nonnegative scaling.
\qed

\section{Configuration Enclosures}
\label{sec::monadic}
While configuration-constrained polytopes provide a practical joint facet--vertex representation, it remains to determine whether such classes are sufficiently flexible to approximate arbitrary compact convex sets. The primary objective of this paper is to show that suitable configuration-constrained classes do possess this approximation property. Our main result can be formulated as follows.

\begin{theorem}
\label{thm::dense}
There exists for every $\epsilon > 0$ and every dimension $d \geq 1$ an integer $n \geq 1$ and a matrix $A \in \mathbb A(n,d)$ for which the Minkowski class $\mathfrak P(A,\one)$ is $\epsilon$-dense in $\mathfrak C^d$.
\end{theorem}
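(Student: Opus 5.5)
The plan is to reduce the statement to the fact that Minkowski summands of a single polytope all lie in one configuration class, and to take as that polytope the Minkowski sum of a finite net of polytopes.

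\emph{Step 1 (reduction).} Fix $d$ and $\epsilon>0$. Translations preserve Hausdorff distance. The class $\mathfrak P(A,\one)$ is closed under translations, since $\Poly(A,c+Ay)=\Poly(A,c)+y$ and $c+Ay\in\mathbb K(A,\one)$ because $\mathrm{Im}(A)$ is the lineality space. By Lemma~\ref{lem::MinkowskiClass} the class is also closed under nonnegative scaling. Sets of diameter $0$ are points, which are $\Poly(A,Ay)$. Hence it suffices to approximate, to accuracy $\epsilon$, every $\mathbb Y\in\mathfrak C^d$ with $\diam(\mathbb Y)=1$ and $0\in\mathbb Y$. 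This family is compact in the Hausdorff metric by Blaschke selection, because it is closed and contained in the unit ball. Choose a finite $\epsilon/2$-net $\mathbb Y_1,\ldots,\mathbb Y_m$ of it. Replace each $\mathbb Y_j$ by a polytope $\mathbb P_j$ with $d_\mathrm{H}(\mathbb P_j,\mathbb Y_j)\le\epsilon/2$; the $\mathbb P_j$ may be lower-dimensional. Every set in the family is then within $\epsilon$ of some $\mathbb P_j$.

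\emph{Step 2 (the template).} Let $\mathbb Q\defeq B+\mathbb P_1+\ldots+\mathbb P_m$, where $B$ is a small full-dimensional simplex containing $0$ in its interior. Then $\mathbb Q$ is a full-dimensional polytope with $0\in\interior{\mathbb Q}$. Let $a_1,\ldots,a_n$ be its distinct outer facet normals. Put $h_i\defeq\max_{x\in\mathbb Q}a_i^\tr x>0$ and define the rows $A_i\defeq a_i/h_i$. Then $\mathbb Q=\Poly(A,\one)$ is a minimal representation, and $A\in\mathbb A(n,d)$ because the facet normals of a full-dimensional polytope positively span $\mathbb R^d$ and are pairwise non-collinear. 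Also $\one\in\overline{\mathbb C}(A)$.

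\emph{Step 3 (summands lie in the class).} For each $j$, set $c_i\defeq\max_{x\in\mathbb P_j}A_ix$. Two things must be shown: that $\mathbb P_j=\Poly(A,c)$, and that $c\in\mathbb K(A,\one)$. This is the main step.

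For the second claim, take any $\mathbb I\in\mathfrak I(A,\one)$ and let $F=\mathbb F(\mathbb I,A,\one)$ be the corresponding face of $\mathbb Q$. Choose $p$ in the relative interior of the normal cone of $F$. By additivity of support sets under Minkowski sums, $F$ is the sum of the faces of $B$ and of the $\mathbb P_k$ maximizing $p$. Consequently the normal cone of $F$ in $\mathbb Q$ is contained in the normal cone of the face $G_j$ of $\mathbb P_j$ maximizing $p$. Each $A_i$ with $i\in\mathbb I$ lies in the normal cone of $F$, so $A_i$ attains its maximum $c_i$ on $G_j$. Therefore $\emptyset\ne G_j\subseteq\mathbb F(\mathbb I,A,c)$, which gives $c\in\mathbb C(\mathbb I,A)$ for every $\mathbb I$, i.e.\ $c\in\mathbb K(A,\one)$.

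For the first claim, the inclusion $\mathbb P_j\subseteq\Poly(A,c)$ is trivial. For the converse, the normal fan of $\mathbb Q$ refines that of $\mathbb P_j$. Hence every facet normal of the full-dimensional set $\mathbb P_j+B$, and every direction needed to cut out $\mathbb P_j$, lies in a cone generated by some rows of $A$ sharing a common maximizing face of $\mathbb P_j$. This yields $\Poly(A,c)\subseteq\mathbb P_j$; equivalently one may invoke the refinement characterization of $\mathbb K(A,b)$ recalled after Remark~\ref{rem::normal-fans}. Handling lower-dimensional $\mathbb P_j$ cleanly is the point that needs care. The fallback is to use $\mathbb P_j+\delta B$ with small $\delta$, which are also summands of $\mathbb Q$, and absorb $\delta$ into $\epsilon$.

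\emph{Step 4 (conclusion).} Each $\mathbb P_j\in\mathfrak P(A,\one)$ by Step 3. Combining this with the reduction in Step 1, $\mathfrak P(A,\one)$ is $\epsilon$-dense in $\mathfrak C^d$.
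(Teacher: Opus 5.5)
Your proof is correct, and it takes a more direct route than the paper. The paper also starts from a compactness net of full-dimensional approximating polytopes. It then collects all their facet normals into one matrix $A$, notes that every net polytope equals some $\Poly(A,b_k)$, and invokes Theorem~\ref{thm::enclosure}. There, averaging representatives of all $\sigma(A)$ maximal elements of $\mathfrak K(A)$ produces a configuration enclosure of $A$, whose class contains \emph{every} nonempty polytope with the normals of $A$. You bypass the intermediate matrix, the enumeration of maximal configuration cones, and the enclosure theorem. You Minkowski-sum the net polytopes themselves and use only the fact that summands of $\Poly(A,\one)$ lie in $\mathfrak P(A,\one)$. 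This is the mechanism behind~\eqref{eq::aux1} and Proposition~\ref{prop::average}, but applied to $\mathbb P_1,\ldots,\mathbb P_m$ rather than to the representatives $\Poly(A,b_i)$, and you prove it from additivity of support sets instead of citing the type-cone characterization. Your route is more elementary. Its template only has to accommodate finitely many prescribed polytopes, so it can be considerably smaller. The paper's route gives the stronger intermediate statement that all polytopes with the collected normals are captured, and it ties the density theorem to the reusable minimal-enclosure construction of Algorithm~\ref{alg:me}.

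Two details should be tightened. First, $0\in\interior{\mathbb Q}$ needs $0\in\mathbb P_j$, which you have not arranged. Either take $\mathbb P_j$ to be the convex hull of finitely many points of $\mathbb Y_j$ including $0$, or translate $\mathbb Q$ by an interior point, which is harmless by translation invariance. Second, your worry about lower-dimensional $\mathbb P_j$ is unnecessary, and the inclusion $\Poly(A,c)\subseteq\mathbb P_j$ can be made precise without the $\delta$-fallback:
\begin{itemize}
\item Every $q\in\mathbb R^d$ is maximized over $\mathbb Q$ at some vertex $v$. By LP duality, $q=\sum_{i\in\mathbb I_v}\lambda_iA_i^\tr$ with $\lambda\geq0$, where $\mathbb I_v\defeq\{i\mid A_iv=1\}\in\mathfrak I(A,\one)$.
\item Your argument for $c\in\mathbb K(A,\one)$, applied to $\mathbb I_v$, gives $g\in\mathbb P_j$ with $A_ig=c_i$ for all $i\in\mathbb I_v$.
\item Hence every $x\in\Poly(A,c)$ satisfies $q^\tr x\leq\sum_{i\in\mathbb I_v}\lambda_ic_i=q^\tr g\leq\max_{y\in\mathbb P_j}q^\tr y$ for all $q$.
\end{itemize}
Therefore $x\in\mathbb P_j$, irrespective of $\dim(\mathbb P_j)$.
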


The proof of Theorem~\ref{thm::dense} is given in Section~\ref{sec::proof1}. We first introduce the concept of configuration enclosures.

\subsection{Outline of the Main Technical Idea}
The main technical idea for proving Theorem~\ref{thm::dense} is to start with an arbitrary facet matrix $A \in \mathbb A(n,d)$ and replace it with a configuration enclosure, defined as follows.
\begin{definition}
A facet matrix $B \in \mathbb A(m,d)$ with $\one\in\overline{\mathbb C}(B)$ is called a configuration enclosure of $A \in \mathbb A(n,d)$ if
\[
\forall b\in\mathbb{C}(\varnothing,A),\quad
\exists c\in\mathbb K(B,\one):\quad \Poly(A,b)=\Poly(B,c).
\]
\end{definition}
A given matrix $A$ may have many configuration enclosures; the enclosure matrix $B$ has $m$ facet normals. As follows from the normal-fan argument below, an enclosure must contain the original normal directions up to positive scaling, and hence $m\geq n$. In the context of practical applications, however, we are interested in keeping the number of new facet normals, $m-n$, as small as possible. Therefore, the following definition of minimal configuration enclosures is useful.
\begin{definition}
We call $B \in \mathbb A(m,d)$ a minimal configuration enclosure of $A \in \mathbb A(n,d)$ if there is no other configuration enclosure of $A$ that has fewer rows than $B$.
\end{definition}
Given these definitions, our strategy for proving Theorem~\ref{thm::dense} is to (i) recall the well-known fact that general polytopes can approximate arbitrary compact convex sets arbitrarily well~\cite{Schneider1981} and (ii) replace such arbitrary polytopes by their configuration enclosures.

\subsection{Monotypic Polytopes}
It is useful to distinguish facet-normal configurations for which one maximal configuration class already contains every polytope representable with those normal directions. We call such configurations monotypic.
\begin{definition}\label{def::monotypeA}
We call a matrix $A \in \mathbb A(n,d)$ monotypic if it satisfies $\sigma(A)=1$.
\end{definition}
In words, monotypic polytopes are precisely those whose configuration poset has a unique maximal element. A polytope $\Poly(A,b)$ is called monotypic if $b$ represents this unique maximal element $\mathbb M$, that is, $\mathbb M=\mathbb K(A,b)$. Notice that Definition~\ref{def::monotypeA} is equivalent to the notion of monotype introduced in~\cite{McMullen1974} relative to the set of normals of induced by the rows of $A$. 

\begin{remark}
Monotypy should be distinguished from strong monotypy, which requires the facet normals to determine the combinatorial type of the complete affine arrangement of the facet hyperplanes and therefore implies monotypy.
Strong monotypy is equivalent to the generating property: every nonempty
intersection of translates of the polytope is a Minkowski summand of the
polytope~\cite{Bui2025,Mcmullen2026}.
\end{remark}

Some important examples of monotypic polytopes are collected in the proposition below.
\begin{proposition}
\label{prop::monadic}
Every matrix $A\in\mathbb A(n,d)$ is monotypic in each of the following cases:
\begin{itemize}
\item $d = 1$ and $n = 2$, or
\item $d = 2$ and $n \geq 3$, or
\item $d \geq 3$ and $n = d+1$, or
\item $d \geq 3$, $n=2d$, and $A = [I_d\ -I_d]^\tr$.
\end{itemize}
The corresponding monotypic polytopes are, respectively, all intervals with nonempty interior in $\mathbb R^1$, all simple polytopes in $\mathbb R^2$, all simplices with nonempty interior in any dimension, and all axis-aligned rectangular boxes with nonempty interior in any dimension.
\end{proposition}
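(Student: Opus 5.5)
We have to prove Proposition~\ref{prop::monadic}: $\sigma(A)=1$ in each of the four cases. The plan is to show in each case that the configuration poset has a unique maximal element. We will show that every configuration cone $\mathbb K(A,c)$, $c\in\overline{\mathbb C}(A)$, is contained in one explicitly described cone $\mathbb M$ which is itself a configuration cone. The natural candidate is $\mathbb M=\overline{\mathbb C}(A)$. Indeed, every $\mathbb K(A,c)$ with $c\in\overline{\mathbb C}(A)$ is an intersection that includes all $\mathbb C(\{i\},A)$, since each single facet is a nonempty face. Hence $\mathbb K(A,c)\subseteq\overline{\mathbb C}(A)$ always holds. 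It therefore suffices to exhibit, in each case, a parameter $b$ with $\mathbb K(A,b)=\overline{\mathbb C}(A)$. Equivalently, for every $\mathbb I\in\mathfrak I(A,b)$ and every $c\in\overline{\mathbb C}(A)$, the face $\mathbb F(\mathbb I,A,c)$ must be nonempty. This reduces each case to a combinatorial statement: for a generic $b$, every index set labelling a nonempty face of $\Poly(A,b)$ also labels a nonempty face of $\Poly(A,c)$, for every $c$ whose single facets are all supporting.

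Cases 1 and 3 are simplices, including the interval. Take $b$ in the interior of $\overline{\mathbb C}(A)$ with $\Poly(A,b)$ full-dimensional. For $n=d+1$ the rows satisfy a unique positive dependence $w^\tr A=0$, $w>0$, and any $d$ rows are linearly independent. Hence every proper subset $\mathbb I$ labels a face, so $\mathfrak I(A,b)$ consists of all proper subsets. For arbitrary $c\in\overline{\mathbb C}(A)$, the intersection of the hyperplanes $A_ix=c_i$, $i\in\mathbb I$, is nonempty, by linear independence for $|\mathbb I|\le d$. Feasibility of the remaining inequalities follows because $\Poly(A,c)$ is a possibly degenerate simplex: either all its vertices $V_jc$ exist, or it is a single point $w^\tr c=0$ lying on all hyperplanes. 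For $d=1$ we argue directly: the rows are $a>0$ and $-a'<0$, and every nonempty interval meets both endpoint conditions.

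Case 4 is the box. Here $\Poly(A,c)=\prod_k[-c_{d+k},c_k]$. The nonempty faces of a full box are exactly those index sets that never contain both $k$ and $d+k$. For $c\in\overline{\mathbb C}(A)$ we have $-c_{d+k}\le c_k$ for every $k$. Any such $\mathbb I$ then gives a nonempty face, obtained by fixing the chosen coordinates at their bounds and leaving the others free. So $\mathbb K(A,\one)=\overline{\mathbb C}(A)$.

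Case 2 is polygons with $n\ge3$ distinct normals and is the main obstacle. Order the normals cyclically by angle and take $b$ so that $\Poly(A,b)$ is a polygon in which all $n$ edges are present. Then $\mathfrak I(A,b)$ consists of $\varnothing$, the singletons, and the cyclically adjacent pairs $\{i,i+1\}$. We must show that for every $c\in\overline{\mathbb C}(A)$, the supporting lines of consecutive normals $A_i,A_{i+1}$ meet at a point of $\Poly(A,c)$. The key fact is that the normal cone of $\Poly(A,c)$ at the point of the boundary attaining $\max A_ix$ contains $A_i$. The support points of $A_i$ and $A_{i+1}$ are connected along the boundary, whose outward normals sweep the angular sector between $A_i$ and $A_{i+1}$. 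No other given normal lies strictly inside that sector. Hence, since every facet constraint is supporting, the boundary arc between these support points is either a single vertex or consists of edges with normals $A_i$ and $A_{i+1}$ only. In both cases some point satisfies both constraints with equality; degenerate segments and points need a separate but easy check. I expect the care needed here, handling degenerate $c$ on the boundary of $\overline{\mathbb C}(A)$, to be the delicate step. Finally, the identification of the monotypic polytopes with the listed families follows because interior points of $\overline{\mathbb C}(A)$ give full-dimensional polytopes with all $n$ facets. In the plane these are simple automatically.
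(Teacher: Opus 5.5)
Your proof is correct, but it is organized differently from the paper's. The paper treats the four cases separately and informally. It calls the interval case trivial and delegates the planar case to \cite[Rem.~3]{Villanueva2024}. Simplices are handled by observing that any $d$ facets always share a vertex, and boxes are reduced to intervals through their product structure. You instead isolate one criterion. Whenever $c\in\overline{\mathbb C}(A)$, every singleton is a face label, so $\mathbb K(A,c)\subseteq\overline{\mathbb C}(A)$. Hence $\sigma(A)=1$ as soon as $\overline{\mathbb C}(A)$ is itself a configuration cone, and conversely monotypy forces this. You then verify, case by case, that the face labels of one full-dimensional representative persist on all of $\overline{\mathbb C}(A)$. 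This gives a self-contained argument, including a direct normal-sweep proof for polygons instead of a citation, and a reusable test for monotypy. The paper's version is shorter but leaves the configuration-cone bookkeeping implicit, for instance why a product of intervals inherits monotypy.

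Two steps should be tightened.
\begin{enumerate}
\item In the simplex case, ``either all its vertices $V_jc$ exist, or it is a single point'' is not the right dichotomy. What you need is the following: for $\mathbb I\neq\{1,\ldots,d+1\}$ and $k\notin\mathbb I$, the point $x=V_kc$ solving $A_jx=c_j$ for $j\neq k$ satisfies $w_k(c_k-A_kx)=w^\tr c\geq 0$. This holds because $\Poly(A,c)\neq\varnothing$, so $x\in\mathbb F(\mathbb I,A,c)$.
\item For the identification of the monotypic polytopes you argue only one direction. You also need the converse: at degenerate parameters, $\mathfrak I(A,b)$ gains an extra label whose cone excludes full-dimensional parameters, so $\mathbb K(A,b)\subsetneq\overline{\mathbb C}(A)$ there. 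Examples are $\{1,\ldots,d+1\}$ for a point simplex, some $\{k,d+k\}$ for a flat box, and three consecutive labels at a vertex where a supporting but redundant edge constraint touches a polygon.
\end{enumerate}
It is also worth noting that the planar representative $b$ with all $n$ edges exists, for example $b_i=\|A_i\|$. Both fixes are routine, and the paper does not spell out the second point either.
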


\textit{Proof}
The case $d=1$ and $n=2$ is trivial, as all intervals in $\mathbb R^1$ with nonempty interior are configuration equivalent. Moreover, the fact that all simple polytopes in $\mathbb R^2$ are monotypic follows from~\cite[Rem.~3]{Villanueva2024}. Polytopes with $n=d+1$ facets and nonempty interior in $\mathbb R^d$ are necessarily simplices, which can only have one possible configuration: any $d$ facets have a joint vertex. Finally, axis-aligned boxes are monotypic because every such $d$-box is a direct product of $d$ one-dimensional intervals, reducing this case to the one-dimensional setting.
\qed

Unfortunately, as much as it is remarkable that all simple polytopes in $\mathbb R^2$ are monotypic, it turns out that the situation in dimension $d \geq 3$ is very different. Namely, in dimensions $d\geq 3$, however, monotypy imposes nontrivial
restrictions on the facet-normal configuration, and arbitrary
normal configurations need not be monotypic. The following minimal
example illustrates this obstruction.

\begin{example}[Minimal non-monotypic polytope]
\label{sec::examples1}
Given Proposition~\ref{prop::monadic}, the simplest possible non-monotypic facet matrix must have $d \geq 3$ columns and $n \geq d+2 \geq 5$ rows, because simplices are monotypic. One such matrix with the minimum possible numbers of rows and columns is
\[
A \ = \ \left(
\begin{array}{rrrrr}
0 & \ -1 & \ \phantom{-}1 &  0 & 0 \\
0 &  0 & 0 & \ -1 & \ \phantom{-}1 \\
-1 & 1 & 1 &  1 & 1
\end{array}
\right)^\tr.
\]
Namely, for this matrix, the sets $\Poly(A,b_1)$ and $\Poly(A,b_2)$ with
\begin{align*}
b_1 & \ = \ \left( \ 0, \ 3, \ 3, \ 1, \ 1 \ \right)^\tr \\
\text{and} \qquad b_2 & \ = \ \left( \ 0, \ 1, \ 1, \ 3, \ 3 \ \right)^\tr
\end{align*}
are both simple polytopes, but they are not configuration-equivalent~\footnote{As already mentioned earlier, one should distinguish carefully between the notions of combinatorial equivalence and configuration equivalence. In this example, the polytope $\Poly(A,b_1)$ can be obtained by rotating the polytope $\Poly(A,b_2)$ by $90^\circ$ around the $z$-axis. Thus, these polytopes are combinatorially equivalent. They are not configuration-equivalent because the row labels of $A$ are fixed.}, as visualized below:\\
\makebox[\linewidth]{
\begin{tabular}{cc}

\begin{minipage}{0.2\textwidth}
\begin{center}
\[
\Poly(A,b_1)
\]
\begin{tikzpicture}%
	[x={(-0.546345cm, -0.225279cm)},
	y={(0.837560cm, -0.146890cm)},
	z={(-0.000053cm, 0.963157cm)},
	scale=0.57000,
	back/.style={dotted, thin},
	edge/.style={color=black, thick},
	facet/.style={fill=black,fill opacity=0.050000},
	vertex/.style={inner sep=0.1pt,circle,draw=black!25!black,fill=black!75!black,thick}]
\coordinate (-1.00000, -3.00000, 0.00000) at (-1.00000, -3.00000, 0.00000);
\coordinate (-1.00000, 3.00000, 0.00000) at (-1.00000, 3.00000, 0.00000);
\coordinate (0.00000, -2.00000, 1.00000) at (0.00000, -2.00000, 1.00000);
\coordinate (0.00000, 2.00000, 1.00000) at (0.00000, 2.00000, 1.00000);
\coordinate (1.00000, -3.00000, 0.00000) at (1.00000, -3.00000, 0.00000);
\coordinate (1.00000, 3.00000, 0.00000) at (1.00000, 3.00000, 0.00000);
\draw[edge,back] (-1.00000, -3.00000, 0.00000) -- (-1.00000, 3.00000, 0.00000);
\draw[edge,back] (-1.00000, -3.00000, 0.00000) -- (0.00000, -2.00000, 1.00000);
\draw[edge,back] (-1.00000, -3.00000, 0.00000) -- (1.00000, -3.00000, 0.00000);
\node[vertex] at (-1.00000, -3.00000, 0.00000)     {};
\fill[facet] (1.00000, 3.00000, 0.00000) -- (0.00000, 2.00000, 1.00000) -- (0.00000, -2.00000, 1.00000) -- (1.00000, -3.00000, 0.00000) -- cycle {};
\fill[facet] (1.00000, 3.00000, 0.00000) -- (-1.00000, 3.00000, 0.00000) -- (0.00000, 2.00000, 1.00000) -- cycle {};
\draw[edge] (-1.00000, 3.00000, 0.00000) -- (0.00000, 2.00000, 1.00000);
\draw[edge] (-1.00000, 3.00000, 0.00000) -- (1.00000, 3.00000, 0.00000);
\draw[edge] (0.00000, -2.00000, 1.00000) -- (0.00000, 2.00000, 1.00000);
\draw[edge] (0.00000, -2.00000, 1.00000) -- (1.00000, -3.00000, 0.00000);
\draw[edge] (0.00000, 2.00000, 1.00000) -- (1.00000, 3.00000, 0.00000);
\draw[edge] (1.00000, -3.00000, 0.00000) -- (1.00000, 3.00000, 0.00000);
\node[vertex] at (-1.00000, 3.00000, 0.00000)     {};
\node[vertex] at (0.00000, -2.00000, 1.00000)     {};
\node[vertex] at (0.00000, 2.00000, 1.00000)     {};
\node[vertex] at (1.00000, -3.00000, 0.00000)     {};
\node[vertex] at (1.00000, 3.00000, 0.00000)     {};
\end{tikzpicture}
\end{center}
\end{minipage}

&

\begin{minipage}{0.2\textwidth}
    
\[
\Poly(A,b_2)
\]

\begin{center}

\begin{tikzpicture}%
	[x={(-0.546345cm, -0.225279cm)},
	y={(0.837560cm, -0.146890cm)},
	z={(-0.000053cm, 0.963157cm)},
	scale=0.570000,
	back/.style={dotted, thin},
	edge/.style={color=black, thick},
	facet/.style={fill=black,fill opacity=0.050000},
	vertex/.style={inner sep=0.1pt,circle,draw=black!25!black,fill=black!75!black,thick}]
\coordinate (-3.00000, -1.00000, 0.00000) at (-3.00000, -1.00000, 0.00000);
\coordinate (-3.00000, 1.00000, 0.00000) at (-3.00000, 1.00000, 0.00000);
\coordinate (-2.00000, 0.00000, 1.00000) at (-2.00000, 0.00000, 1.00000);
\coordinate (2.00000, 0.00000, 1.00000) at (2.00000, 0.00000, 1.00000);
\coordinate (3.00000, -1.00000, 0.00000) at (3.00000, -1.00000, 0.00000);
\coordinate (3.00000, 1.00000, 0.00000) at (3.00000, 1.00000, 0.00000);
\draw[edge,back] (-3.00000, -1.00000, 0.00000) -- (-3.00000, 1.00000, 0.00000);
\draw[edge,back] (-3.00000, -1.00000, 0.00000) -- (-2.00000, 0.00000, 1.00000);
\draw[edge,back] (-3.00000, -1.00000, 0.00000) -- (3.00000, -1.00000, 0.00000);
\node[vertex] at (-3.00000, -1.00000, 0.00000)     {};
\fill[facet] (3.00000, 1.00000, 0.00000) -- (-3.00000, 1.00000, 0.00000) -- (-2.00000, 0.00000, 1.00000) -- (2.00000, 0.00000, 1.00000) -- cycle {};
\fill[facet] (3.00000, 1.00000, 0.00000) -- (2.00000, 0.00000, 1.00000) -- (3.00000, -1.00000, 0.00000) -- cycle {};
\draw[edge] (-3.00000, 1.00000, 0.00000) -- (-2.00000, 0.00000, 1.00000);
\draw[edge] (-3.00000, 1.00000, 0.00000) -- (3.00000, 1.00000, 0.00000);
\draw[edge] (-2.00000, 0.00000, 1.00000) -- (2.00000, 0.00000, 1.00000);
\draw[edge] (2.00000, 0.00000, 1.00000) -- (3.00000, -1.00000, 0.00000);
\draw[edge] (2.00000, 0.00000, 1.00000) -- (3.00000, 1.00000, 0.00000);
\draw[edge] (3.00000, -1.00000, 0.00000) -- (3.00000, 1.00000, 0.00000);
\node[vertex] at (-3.00000, 1.00000, 0.00000)     {};
\node[vertex] at (-2.00000, 0.00000, 1.00000)     {};
\node[vertex] at (2.00000, 0.00000, 1.00000)     {};
\node[vertex] at (3.00000, -1.00000, 0.00000)     {};
\node[vertex] at (3.00000, 1.00000, 0.00000)     {};
\end{tikzpicture} .
\end{center}
\end{minipage}

\end{tabular}}
\medskip
The second and third facets of the left polytope have no joint vertex, whereas the second and third facets of the right polytope have two joint vertices. Of course, for such simple examples, we can still use our ``geometric intuition'' to construct a configuration enclosure. For instance, we can add one row to $A$ to construct the new matrix
\[
B \ = \ \left(
\begin{array}{rrrrrr}
0 & \ -1 & \ \phantom{-}1 &  0 & 0 & 0 \\
0 &  0 & 0 & \ -1 & \ \phantom{-}1 & 0 \\
-1 & 1 & 1 &  1 & 1 & \phantom{-}1
\end{array}
\right)^\tr,
\]
which turns out to be a minimal configuration enclosure of $A$. In fact, $B$ is even monotypic. For example, an associated monotypic polytope $\Poly(B,c)$ is obtained for $c \ = \ \left( 0, 2, 2, 2, 2, 1 \right)^\tr$,
\begin{center}
\begin{tabular}{cl}
\begin{minipage}{0.15\textwidth}
\vspace{-1.4cm}
$\Poly(B,c) \ = \ $
\end{minipage}
&
\hspace{-1cm}
\begin{tikzpicture}%
	[x={(-0.546345cm, -0.225279cm)},
	y={(0.837560cm, -0.146890cm)},
	z={(-0.000053cm, 0.963157cm)},
	scale=0.600000,
	back/.style={dotted, thin},
	edge/.style={color=black, thick},
	facet/.style={fill=black,fill opacity=0.050000},
	vertex/.style={inner sep=0.1pt,circle,draw=black!15!black,fill=black!75!black,thin}]
\coordinate (-2.00000, -2.00000, 0.00000) at (-2.00000, -2.00000, 0.00000);
\coordinate (-2.00000, 2.00000, 0.00000) at (-2.00000, 2.00000, 0.00000);
\coordinate (-1.00000, -1.00000, 1.00000) at (-1.00000, -1.00000, 1.00000);
\coordinate (-1.00000, 1.00000, 1.00000) at (-1.00000, 1.00000, 1.00000);
\coordinate (1.00000, -1.00000, 1.00000) at (1.00000, -1.00000, 1.00000);
\coordinate (1.00000, 1.00000, 1.00000) at (1.00000, 1.00000, 1.00000);
\coordinate (2.00000, -2.00000, 0.00000) at (2.00000, -2.00000, 0.00000);
\coordinate (2.00000, 2.00000, 0.00000) at (2.00000, 2.00000, 0.00000);
\draw[edge,back] (-2.00000, -2.00000, 0.00000) -- (-2.00000, 2.00000, 0.00000);
\draw[edge,back] (-2.00000, -2.00000, 0.00000) -- (-1.00000, -1.00000, 1.00000);
\draw[edge,back] (-2.00000, -2.00000, 0.00000) -- (2.00000, -2.00000, 0.00000);
\node[vertex] at (-2.00000, -2.00000, 0.00000)     {};
\fill[facet] (2.00000, 2.00000, 0.00000) -- (-2.00000, 2.00000, 0.00000) -- (-1.00000, 1.00000, 1.00000) -- (1.00000, 1.00000, 1.00000) -- cycle {};
\fill[facet] (2.00000, 2.00000, 0.00000) -- (1.00000, 1.00000, 1.00000) -- (1.00000, -1.00000, 1.00000) -- (2.00000, -2.00000, 0.00000) -- cycle {};
\fill[facet] (1.00000, 1.00000, 1.00000) -- (-1.00000, 1.00000, 1.00000) -- (-1.00000, -1.00000, 1.00000) -- (1.00000, -1.00000, 1.00000) -- cycle {};
\draw[edge] (-2.00000, 2.00000, 0.00000) -- (-1.00000, 1.00000, 1.00000);
\draw[edge] (-2.00000, 2.00000, 0.00000) -- (2.00000, 2.00000, 0.00000);
\draw[edge] (-1.00000, -1.00000, 1.00000) -- (-1.00000, 1.00000, 1.00000);
\draw[edge] (-1.00000, -1.00000, 1.00000) -- (1.00000, -1.00000, 1.00000);
\draw[edge] (-1.00000, 1.00000, 1.00000) -- (1.00000, 1.00000, 1.00000);
\draw[edge] (1.00000, -1.00000, 1.00000) -- (1.00000, 1.00000, 1.00000);
\draw[edge] (1.00000, -1.00000, 1.00000) -- (2.00000, -2.00000, 0.00000);
\draw[edge] (1.00000, 1.00000, 1.00000) -- (2.00000, 2.00000, 0.00000);
\draw[edge] (2.00000, -2.00000, 0.00000) -- (2.00000, 2.00000, 0.00000);
\node[vertex] at (-2.00000, 2.00000, 0.00000)     {};
\node[vertex] at (-1.00000, -1.00000, 1.00000)     {};
\node[vertex] at (-1.00000, 1.00000, 1.00000)     {};
\node[vertex] at (1.00000, -1.00000, 1.00000)     {};
\node[vertex] at (1.00000, 1.00000, 1.00000)     {};
\node[vertex] at (2.00000, -2.00000, 0.00000)     {};
\node[vertex] at (2.00000, 2.00000, 0.00000)     {};
\end{tikzpicture}.
\end{tabular}
\end{center}
In addition to being monotypic, this polytope represents the Minkowski average
\[
\Poly \left( B, c \right) \ = \ \frac{\Poly( A, b_1 ) + \Poly( A, b_2 )}{2}.
\]
This example anticipates the Minkowski-averaging construction of minimal configuration enclosures developed next.
\end{example}

\section{Combinatorial Averaging}
This section develops a general method for constructing configuration enclosures and thereby provides a constructive proof of Theorem~\ref{thm::dense}.

\subsection{Minkowski Sums}
\label{sec::MinkowskiSums}
Let $\mathbb P_1,\ldots,\mathbb P_s\subseteq\mathbb R^d$ be full-dimensional polytopes that admit a (potentially nonminimal) facet representation $\mathbb P_i=\Poly(A,b_i)$, $i\in\{1,\ldots,s\}$. For $\lambda\in\mathbb R_{++}^s$, consider the weighted Minkowski average
\[
\overline{\mathbb P}_\lambda \ \defeq \ \frac{1}{s} \sum_{i=1}^s \lambda_i \mathbb P_i.
\]
The normal fan of a Minkowski sum is the common refinement of the normal fans of its summands. Consequently, the normal fan of $\overline{\mathbb P}_\lambda$ is independent of the positive weights $\lambda$; see~\cite[Sec.~6]{McMullen1973} and~\cite[Sec.~3.1]{Schneider2013}. Choose $z\in\interior{\overline{\mathbb P}_\one}$. After translating by $-z$ and positively scaling the inequalities in a minimal facet representation, we may write
\begin{align}
\label{eq::P1}
\overline{\mathbb P}_\one-z \ = \ \Poly\left(B,\one\right).
\end{align}
This normalization changes neither the normal fan nor the number of facets. The common-refinement property and the normal-fan characterization of configuration cones then imply
\begin{align}
\label{eq::aux1}
\forall \lambda\in\mathbb R_{++}^s,\ \exists c\in\mathbb K(B,\one):\quad
\overline{\mathbb P}_\lambda=\Poly(B,c).
\end{align}
The following proposition records the corresponding boundary cases.

\begin{proposition}
\label{prop::average}
Let $\mathbb P_i = \Poly(A,b_i)$, $i \in \{ 1, \ldots, s \}$, be full-dimensional polytopes with $A \in \mathbb A(n,d)$, and let $B \in \mathbb A(m,d)$ be a normalized minimal facet matrix satisfying~\eqref{eq::P1}. Then there exist vectors $c_i \in \mathbb K(B,\one)$ such that
\[
\mathbb P_i=\Poly(A,b_i)=\Poly(B,c_i),
\qquad i\in\{1,\ldots,s\}.
\]
\end{proposition}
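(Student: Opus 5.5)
The plan is to obtain each $\mathbb P_i$ as a boundary case of the weighted averages in~\eqref{eq::aux1}, sending the weight of $\mathbb P_i$ to $s$ and all other weights to zero, and to pass to the limit using closedness of $\mathbb K(B,\one)$. Fix $i$ and, for $t\in(0,1]$, let $\lambda^{(t)}$ have $i$-th entry $s$ and all other entries $t$. By~\eqref{eq::aux1} there is $c^{(t)}\in\mathbb K(B,\one)$ with
\[
\overline{\mathbb P}_{\lambda^{(t)}} \;=\; \mathbb P_i+\frac{t}{s}\sum_{j\neq i}\mathbb P_j \;=\; \Poly(B,c^{(t)}).
\]
As in the proof of Lemma~\ref{lem::MinkowskiClass}, I will replace each $c^{(t)}$ by its support vector, $c^{(t)}_k=\max_{x\in\overline{\mathbb P}_{\lambda^{(t)}}}B_k x$. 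This replacement leaves the set unchanged. I also need it to stay in $\mathbb K(B,\one)$; that holds because on the interior of a maximal chamber all inequalities are facet-defining, so the representative already equals the support vector.

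The next step is the limit $t\to 0$. Support functions are additive under Minkowski sums, so
\[
c^{(t)}_k=h_{\mathbb P_i}(B_k)+\frac{t}{s}\sum_{j\ne i}h_{\mathbb P_j}(B_k).
\]
This is affine in $t$, and therefore $c^{(t)}\to c_i$ with $(c_i)_k\defeq h_{\mathbb P_i}(B_k)$. The cone $\mathbb K(B,\one)$ is closed, so $c_i\in\mathbb K(B,\one)$. The Hausdorff limit of $\overline{\mathbb P}_{\lambda^{(t)}}$ is $\mathbb P_i$. Lemma~\ref{lem::MinkowskiClass} states that $\mathfrak P(B,\one)$ is closed in the Hausdorff metric, and its proof identifies the limit as $\Poly(B,c)$ with $c$ the limiting support vector. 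This gives $\mathbb P_i=\Poly(B,c_i)$, and $\Poly(A,b_i)=\mathbb P_i$ holds by hypothesis.

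The delicate point is the claim that $\one$ represents a cone in which the weighted averages sit. We need $\mathbb K(B,\one)$ to be exactly the closed chamber of the common normal fan, so that~\eqref{eq::aux1} holds for every $\lambda$ with the same $B$. This was asserted before the proposition through the normal-fan characterization, and I will simply invoke it.

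As a sanity check on the limit argument, and as a possible direct alternative, I would also verify that the normal fan of $\mathbb P_i$ is coarsened by the common refinement $\mathcal N(\overline{\mathbb P}_\one)$, which holds because a Minkowski summand's fan is coarser than the sum's. Since $\Poly(B,\one)$ is a minimal representation, the refinement characterization of $\mathbb K(B,\cdot)$ stated after Remark~\ref{rem::normal-fans} then gives $h_{\mathbb P_i}(B)\in\mathbb K(B,\one)$ directly. It also gives $\Poly(B,h_{\mathbb P_i}(B))=\mathbb P_i$, because every facet normal of $\mathbb P_i$ is a ray of the refinement and hence, up to scaling, a row of $B$.
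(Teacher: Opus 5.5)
Your proof is correct, but your main route differs from the paper's. The paper's proof is the one-step argument you give only as a sanity check at the end. The fan of $\Poly(B,\one)$ is the common refinement, so it refines the fan of each summand $\mathbb P_i$. Hence every facet normal of $\mathbb P_i$ is, up to scaling, a row of $B$, which gives $\Poly(B,c_i)=\mathbb P_i$ for the $B$-support vector $c_i$. The refinement characterization of configuration cones, in its boundary (coarsening) form, then gives $c_i\in\mathbb K(B,\one)$.

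Your main proof uses only the strictly-positive-weight statement~\eqref{eq::aux1}, where the fan is unchanged, and reaches the degenerate weights by a limit. Support vectors are affine in the weights, $\mathbb K(B,\one)$ is closed, and the Hausdorff limit is identified through the closedness part of Lemma~\ref{lem::MinkowskiClass}. This makes precise the paper's remark that the proposition records the boundary cases of~\eqref{eq::aux1}. It yields both $c_i\in\mathbb K(B,\one)$ and $\Poly(B,c_i)=\mathbb P_i$ without arguing about rays of fans or using the coarsening half of the characterization. The cost is reliance on the common vertex representation~\eqref{eq::vertices} for $\mathbb K(B,\one)$, which may be a non-maximal cone. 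Since~\eqref{eq::aux1} is itself a normal-fan statement, your route is not more elementary overall; it just moves the fan input to the equal-fan case.

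One justification should be fixed. You argue that $c^{(t)}$ is already a support vector because it lies in the interior of a maximal chamber. But $\mathbb K(B,\one)$ need not be a maximal element of $\mathfrak K(B)$: it has empty interior as soon as $\overline{\mathbb P}_\one$ has a vertex lying on more than $d$ facets, for example when $s=1$ and $\mathbb P_1$ is an octahedron. The correct reason is that $\Poly(B,\one)$ is minimally represented, so every singleton $\{j\}$ lies in $\mathfrak I(B,\one)$. Hence $\mathbb K(B,\one)\subseteq\overline{\mathbb C}(B)$, and every $c\in\mathbb K(B,\one)$ is its own support vector, exactly as used in the proof of Lemma~\ref{lem::MinkowskiClass}.
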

\textit{Proof.} The normal fan of $\Poly(B,\one)$ is the common refinement of the normal fans of the summands and therefore refines the normal fan of each $\mathbb P_i$. Define the $B$-support vector of $\mathbb P_i$ by
\[
(c_i)_j\defeq\max_{x\in\mathbb P_i}B_jx,\qquad j\in\{1,\ldots,m\}.
\]
Every facet-normal ray of $\mathbb P_i$ is a ray of the common refinement, so $\Poly(B,c_i)=\mathbb P_i$. The normal-fan characterization of configuration cones further gives $c_i\in\mathbb K(B,\one)$; see~\cite[Thm.~8]{McMullen1973} and~\cite{Houska2025}.
\qed

\subsection{Combinatorial Averaging}
The core strategy for constructing configuration enclosures of an arbitrary matrix $A \in \mathbb A(n,d)$ is to construct the Minkowski average of its configurations. The corresponding method is summarized in Algorithm~\ref{alg:me}.
\begin{algorithm}[ht!]
\caption{Combinatorial Averaging}
\label{alg:me}
\smallskip
\begin{enumerate}[label=\arabic*)]
\addtolength{\itemsep}{2pt}
\item Start with any matrix $A \in \mathbb A(n,d)$.
\item \label{step::2} Pick for each maximal element $\mathbb M_i \in \mathfrak K(A)$ a representative $b_i\in\interior{\mathbb M_i}$ such that $\mathbb M_i = \mathbb K(A,b_i)$.
\item Form the Minkowski average
\[
\overline{\mathbb P}\defeq\frac{1}{\sigma(A)}\sum_{i=1}^{\sigma(A)}\Poly(A,b_i),
\]
choose $z\in\interior{\overline{\mathbb P}}$, and compute a normalized minimal facet representation
\[
\Poly(B,\one)=\overline{\mathbb P}-z.
\]
\item Return the matrix $B$.
\end{enumerate}
\smallskip
\end{algorithm}
The following theorem establishes the fact that this procedure returns the minimal configuration enclosure of any given facet matrix.

\begin{theorem}
\label{thm::enclosure}
Algorithm~\ref{alg:me} returns a minimal configuration enclosure of any given input matrix $A \in \mathbb A(n,d)$.
\end{theorem}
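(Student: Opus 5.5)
Two things must be shown: that the matrix $B$ returned by Algorithm~\ref{alg:me} is a configuration enclosure of $A$, and that no configuration enclosure of $A$ has fewer rows than $B$. Throughout, configuration cones are described via labeled normal fans, as in Remark~\ref{rem::normal-fans}: for a minimal representation, $c\in\mathbb K(B,\one)$ exactly when the normal fan of $\Poly(B,\one)$ refines that of $\Poly(B,c)$.

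\textbf{Enclosure property.} Fix $b\in\mathbb C(\varnothing,A)$, so $\Poly(A,b)\neq\varnothing$. Replacing $b$ by its support vector $b'$, we may assume $b\in\overline{\mathbb C}(A)$.
\begin{enumerate}
\item Show that $\overline{\mathbb C}(A)$ is the union of the maximal cones $\mathbb M_1,\ldots,\mathbb M_{\sigma(A)}$. Every $\mathbb K(A,b)$ lies in a maximal element of the finite poset $\mathfrak K(A)$, and $b\in\mathbb K(A,b)$. Hence $b\in\mathbb M_i$ for some $i$.
\item For this $i$, the fan of $\Poly(A,b_i)$ refines the fan of $\Poly(A,b)$, because $b\in\mathbb K(A,b_i)$. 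This covers degenerate and lower-dimensional cases as coarsenings.
\item The fan of $\Poly(B,\one)=\overline{\mathbb P}-z$ is the common refinement of the fans of the $\Poly(A,b_j)$, so it refines the fan of $\Poly(A,b_i)$.
\item By transitivity, the fan of $\Poly(B,\one)$ refines the fan of $\Poly(A,b)$.
\item Define $c_j=\max_{x\in\Poly(A,b)}B_jx$. As in Proposition~\ref{prop::average}, every facet normal of $\Poly(A,b)$ is a ray of the refined fan, hence equals some $B_j$ up to positive scaling. Therefore $\Poly(B,c)=\Poly(A,b)$, and the fan characterization gives $c\in\mathbb K(B,\one)$.
\end{enumerate}
Proposition~\ref{prop::average} handles exactly the full-dimensional representatives $b_i$. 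The general $b$ follows by the same argument combined with step 2.

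\textbf{Minimality.} Let $B'\in\mathbb A(m',d)$ be any configuration enclosure of $A$. For each $i$, the polytope $\Poly(A,b_i)$ equals $\Poly(B',c_i')$ with $c_i'\in\mathbb K(B',\one)$. Consequently the normal fan $\mathcal N'$ of $\Poly(B',\one)$ refines every fan $\mathcal N(\Poly(A,b_i))$. It therefore refines their common refinement, which is the fan of $\Poly(B,\one)$. A refinement has at least as many rays as the fan it refines, since each ray of the coarser complete fan must remain a ray. The rays of $\mathcal N'$ are among the rows of $B'$ up to scaling, and $B$ has exactly one row per ray of its fan, because its representation is minimal. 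Hence $m'\geq m$.

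\textbf{Main obstacle.} The delicate step is the claim that a ray of a coarser complete fan remains a ray of any refinement. A one-dimensional cone of the coarser fan is a union of cones of the refinement, and a ray cannot be written as a union of cones that contains none of dimension one. The enclosure property also needs care with nonminimal $B'$ (redundant rows) and with degenerate $b$. I would handle both by always passing to support vectors and arguing at the level of normal fans, using the stated correspondence between configuration cones and fan refinement.
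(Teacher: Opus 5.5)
Your proof is correct. The enclosure half follows the paper's argument step for step: a maximal cone containing $b$, transitivity of refinement through $\Poly(A,b_i)$, the $B$-support vector, and the fan characterization.

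The minimality half takes a genuinely different route. The paper argues algebraically. For any enclosure $D$, the class $\mathfrak P(D,\one)$ is a Minkowski class (Lemma~\ref{lem::MinkowskiClass}) containing every $\Poly(A,b_i)$, and hence their average. It is translation invariant because its lineality space is $\mathrm{Im}(D)$, so it contains $\Poly(B,\one)$ itself. Then $D$ is an $r$-row facet representation of a polytope whose minimal representation has $m$ rows, so $r\geq m$. You stay at the level of fans instead. The fan of $\Poly(B',\one)$ refines each $\mathcal N(\Poly(A,b_i))$, hence refines their common refinement by its universal property, and rays of a complete fan persist under refinement.

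Both routes reach the same intermediate fact: any enclosure's fan is compatible with $\Poly(B,\one)$. The paper's route costs nothing beyond Lemma~\ref{lem::MinkowskiClass}, which itself rests on the common vertex representation~\eqref{eq::vertices}. Yours avoids that lemma and translation invariance, but needs two extra ingredients. The first is the ray-persistence lemma, which you justify correctly. The second is the implication ``$c'\in\mathbb K(B',\one)$ implies that the fan of $\Poly(B',\one)$ refines the fan of $\Poly(B',c')$'' for a possibly nonminimal $B'$, whereas the paper states the fan characterization only for minimal representations. This direction does hold in general. The normal cone at a vertex $v$ of $\Poly(B',\one)$ is generated by the rows in the active set $\mathbb I_v$. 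Since $c'\in\mathbb C(\mathbb I_v,B')$, all of these rows lie in a single normal cone of $\Poly(B',c')$.

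One small repair is needed in step~5. When $\Poly(A,b)$ is lower-dimensional, its normal fan has nontrivial lineality and no rays. The phrase ``every facet normal of $\Poly(A,b)$ is a ray of the refined fan'' then does not give $\Poly(B,c)\subseteq\Poly(A,b)$. The paper closes this by first showing that $B$ contains a positive multiple of every row of $A$. You can get this from your own ray-persistence lemma applied to the full-dimensional, minimally represented $\Poly(A,b_i)$. Once those rows are present with their support values, the inclusion is immediate.
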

\textit{Proof.} Let $\mathbb P_i\defeq\Poly(A,b_i)$, $i\in\{1,\ldots,\sigma(A)\}$, and let $\Poly(B,\one)$ be the normalized translate of their Minkowski average constructed in Step~3 of Algorithm~\ref{alg:me}. Proposition~\ref{prop::average} implies that every $\mathbb P_i$ belongs to $\mathfrak P(B,\one)$.

Because $b_i\in\interior{\mathbb M_i}$, each $\mathbb P_i$ is simple and minimally represented by $A$. Hence every ray generated by a row of $A$ is a ray of the normal fan of each $\mathbb P_i$. The normal fan of $\Poly(B,\one)$ is their common refinement and retains these rays. After a row permutation, we may therefore write
\[
B=\begin{pmatrix}D A\\ \widehat B\end{pmatrix},\qquad
D=\operatorname{diag}(\rho_1,\ldots,\rho_n),\quad \rho_i>0.
\]

Let $c\in\mathbb R^n$ satisfy $\Poly(A,c)\neq\varnothing$. Replacing $c$ by its support vector, if necessary, does not change the polytope and ensures $c\in\overline{\mathbb C}(A)$. Since the configuration poset is finite, $\mathbb K(A,c)$ is contained in a maximal element $\mathbb M_k=\mathbb K(A,b_k)$. Thus $c\in\mathbb M_k$, and the normal fan of $\mathbb P_k$ refines that of $\Poly(A,c)$. By the common-refinement property, the normal fan of $\Poly(B,\one)$ also refines that of $\Poly(A,c)$. Define
\[
\widehat c_j\defeq\max_{x\in\Poly(A,c)}B_jx,\qquad j\in\{1,\ldots,m\}.
\]
Since the rows of $B$ include positive multiples of all rows of $A$, the additional inequalities are valid support inequalities and $\Poly(B,\widehat c)=\Poly(A,c)$. The normal-fan characterization gives $\widehat c\in\mathbb K(B,\one)$. Therefore, $B$ is a configuration enclosure of $A$; see~\cite[Thm.~8]{McMullen1973} and~\cite{Houska2025}.

It remains to prove minimality. Let $D\in\mathbb A(r,d)$ be any configuration enclosure of $A$. Then $\mathfrak P(D,\one)$ contains every $\mathbb P_i$. Lemma~\ref{lem::MinkowskiClass}, together with closure under nonnegative scaling, implies that this class contains their Minkowski average. It is also translation invariant because $\mathrm{Im}(D)$ is the lineality space of $\mathbb K(D,\one)$. Hence, it contains the normalized translate $\Poly(B,\one)$ from Step~3. Thus, $D$ provides an $r$-row facet representation of $\Poly(B,\one)$. Since $B$ is a minimal facet representation of this polytope, $m\leq r$. Therefore, no configuration enclosure of $A$ has fewer rows than $B$.
\qed

In particular, the proof shows that the enclosure matrix contains every original normal direction, possibly with a positive rescaling.

\input{figures/fig8}

\begin{example}[A configuration enclosure]\label{ex::cutcube}
The steps of Algorithm~\ref{alg:me} are illustrated by the matrix
\[
A \ = \ \left(
\begin{array}{rrrrrrr}
1 & 0 & 0 & \ -1 &  0 &  0 & 1 \\
0 & \ \phantom{-}1 & 0 &  0 & \ -1 &  0 & 1 \\
0 & 0 & \ \phantom{-}1 &  0 &  0 & \ -1 & \ \phantom{-}1
\end{array}
\right)^\tr.
\]
The first six rows of this matrix are the facet normals of the unit cube in $\mathbb R^3$, while the seventh row vector of $A$ is orthogonal to the convex hull of the first three row vectors of $A$. Consequently, $A$ can be used to represent, for example, the unit cube, and many other geometric objects that can be obtained by intersecting the unit cube with a coordinate-aligned simplex. A more in-depth analysis shows that there are $8$ configuration cones with nonempty interior, corresponding to the $8$ maximal elements of the poset $\mathfrak K(A)$. For example, the vectors
\begin{align*}
b_1 & \ = \ \left( \ 2, \ 2, \ 2, \ 0, \ 0, \ 0, \ 3 \ \right)^\tr \\
b_2 & \ = \ \left( \ 1, \ 3, \ 3, \ 0, \ 0, \ 0, \ 5 \ \right)^\tr, \\
b_3 & \ = \ \left( \ 3, \ 1, \ 3, \ 0, \ 0, \ 0, \ 5 \ \right)^\tr, \\
b_4 & \ = \ \left( \ 3 , \ 3, \ 1, \ 0, \ 0, \ 0, \ 5 \ \right)^\tr, \\
b_5 & \ = \ \left( \ 2 , \ 2, \ 4, \ 0, \ 0, \ 0, \ 5 \ \right)^\tr, \\
b_6 & \ = \ \left( \ 4, \ 2, \ 2, \ 0, \ 0, \ 0, \ 5 \ \right)^\tr, \\
b_7 & \ = \ \left( \ 2, \ 4, \ 2, \ 0, \ 0, \ 0, \ 5 \ \right)^\tr, \\
\text{and} \quad 
b_8 & \ = \ \left( \ 2, \ 2, \ 2, \ 0, \ 0, \ 0, \ 5 \ \right)^\tr
\end{align*}
represent $8$ different polytopes $\Poly(A,b_i)$, $i \in \{ 1, \ldots, 8 \}$. These polytopes are all simple and pairwise
configuration-inequivalent with respect to the fixed row labels of
$A$, although some of them are combinatorially equivalent after
relabeling their facets. Thus, if we initialize Algorithm~\ref{alg:me} with $A$, we can pick these vectors as representatives of the maximal elements of $\mathfrak K(A)$. Next, we work out the Minkowski average in Step~3) of Algorithm~\ref{alg:me}. After scaling the rows of $B$ appropriately, this yields
\[
B \ = \ \left(
\begin{array}{rrrrrrrrrr}
1 & 0 & 0 & \ -1 &  0 &  0 & 1 & 0 & 1 & 1  \\
0 & \ \phantom{-}1 & 0 &  0 & \ -1 &  0 & 1 & 1 & 0 & \ \phantom{-}1 \\
0 & 0 & \ \phantom{-}1 &  0 &  0 & \ -1 & \ \phantom{-}1 & \ \phantom{-}1 & \ \phantom{-}1 & 0
\end{array}
\right)^\tr.
\]
A corresponding (appropriately scaled) representative vector is given by
\[
c^\star \ = \ \left(
\begin{array}{rrrrrrrrrr}
3 & 3 & 3 & 0 & 0 & 0 & 6 & 5 & 5 & 5
\end{array}
\right)^\tr,
\]
where $\mathfrak P(B,c^\star)$ contains the Minkowski average polytope. The translation and positive row scaling used in Step~3 merely normalize an interior representative to $\one$ and do not change these enclosure directions or the represented family. All simple polytopes in the Minkowski class $\mathfrak P(B,c^\star)$ have $10$ facets in total. These facets always consist of $1$ hexagon, $6$ pentagons, and $3$ rectangles; see Figure~\ref{fig:4}.
\end{example}

\begin{remark}
Minimal configuration enclosures need not be monotypic. In Example~\ref{sec::examples1}, the minimal configuration enclosure is monotypic, whereas the minimal configuration enclosure obtained in Example~\ref{ex::cutcube} is not, 
since $\mathfrak{K}(B)$ admits more than one maximal element. In particular, 
the polytope $P(B,\tilde{c})$ with $\tilde{c} = (4,4,4,0,0,0,7,5,5,5)^\intercal$ is simple and minimally represented by all rows of $B$ but 
it is not configuration-equivalent to $P(B,c^\star)$, hence $\sigma(B) > 1$.    
\end{remark}

\subsection{Proof of Theorem~\ref{thm::dense}}
\label{sec::proof1}
Let
\[
\mathfrak C_1^d\defeq\left\{\mathbb X\in\mathfrak C^d\ \middle|\ 0\in\mathbb X,\ \diam(\mathbb X)=1\right\}.
\]
The Blaschke selection theorem implies that $\mathfrak C_1^d$ is compact in the Hausdorff metric; see~\cite[Sec.~1.8]{Schneider2013}. For each $\mathbb X\in\mathfrak C_1^d$, classical polytope-approximation results provide a full-dimensional polytope $\mathbb Q_{\mathbb X}$ satisfying
\[
d_{\mathrm H}(\mathbb X,\mathbb Q_{\mathbb X})<\epsilon/2;
\]
see~\cite{Dudley1974,Bronshtein1975,Schneider1981}. Compactness yields finitely many sets $\mathbb X_1,\ldots,\mathbb X_N\in\mathfrak C_1^d$ whose Hausdorff balls of radius $\epsilon/2$ cover $\mathfrak C_1^d$. Let $A$ collect all outward facet normals of the finitely many polytopes $\mathbb Q_{\mathbb X_1},\ldots,\mathbb Q_{\mathbb X_N}$, after repeated positive directions have been removed. Since each $\mathbb Q_{\mathbb X_k}$ is bounded and full-dimensional, its facet normals positively span $\mathbb R^d$; hence, $A\in\mathbb A(n,d)$ for some $n$. Assigning the additional inequalities their support values gives a representation $\mathbb Q_{\mathbb X_k}=\Poly(A,b_k)$ for every $k$. Hence, for every $\mathbb X\in\mathfrak C_1^d$, some $k$ satisfies
\[
d_{\mathrm H}(\mathbb X,\Poly(A,b_k))
\leq d_{\mathrm H}(\mathbb X,\mathbb X_k)
+d_{\mathrm H}(\mathbb X_k,\mathbb Q_{\mathbb X_k})<\epsilon.
\]

Translation and positive scaling extend this estimate to every $\mathbb Y\in\mathfrak C^d$ with positive diameter: for $y\in\mathbb Y$, apply the normalized result to $(\mathbb Y-y)/\diam(\mathbb Y)$ and use
\[
\alpha\Poly(A,b)+y=\Poly(A,\alpha b+Ay).
\]
Singletons are represented exactly because $\Poly(A,Ay)=\{y\}$. Thus, the family of all polytopes $\Poly(A,\cdot)$ is $\epsilon$-dense in $\mathfrak C^d$. Finally, Theorem~\ref{thm::enclosure} provides a configuration enclosure $B$ of $A$. By definition, $\mathfrak P(B,\one)$ contains every nonempty polytope representable with $A$ and is therefore also $\epsilon$-dense in $\mathfrak C^d$. Renaming $B$ as $A$ proves the theorem.
\qed

\subsection{Computational Scope and Offline Construction}
\label{sec::computational-scope}

Algorithm~\ref{alg:me} has a primarily theoretical role. Together with
Theorem~\ref{thm::enclosure}, it provides a finite construction of a
minimal configuration enclosure for every admissible facet matrix.
As shown in the preceding subsection, this construction also completes
the constructive proof of Theorem~\ref{thm::dense}. Thus,
Algorithm~\ref{alg:me} establishes not only the existence of universal
Minkowski classes, but also that they can, in principle, be generated by
a finite procedure.

To make the enumeration step explicit, quotient the parameter space by the translation lineality $\mathrm{Im}(A)$ and fix one positive scale. The maximal elements of $\mathfrak K(A)$ then appear as the maximal cells of a finite type-cone, or chamber, subdivision. Two maximal elements are adjacent when their normalized cells share a codimension-one face. The resulting adjacency graph is finite and connected: the chamber fan is the normal fan of a secondary polyhedron, and its maximal cones are connected through facet adjacencies~\cite[Cor.~1.42 and Rem.~1.45]{FillastreIzmestiev2017}. A generic parameter in the relative interior of the normalized admissible domain supplies an initial maximal cell. Starting from this cell, one may therefore enumerate all maximal elements by generating the neighboring cells across its irredundant facets and traversing this graph with breadth-first or depth-first search; reverse search provides a memory-efficient alternative~\cite{Avis1996}. This supplies a finite enumeration scheme in principle, although constructing efficient local adjacency oracles and output-sensitive implementations remains beyond the scope of this paper.

This constructive result should not be interpreted as a claim that
combinatorial averaging is the most efficient method for computing
configuration enclosures. The desired object is the enclosure matrix
$B$, rather than the enumeration or the Minkowski average itself.
Consequently, alternative constructions might generate a configuration
enclosure---possibly a minimal one---without explicitly enumerating all
maximal elements of $\mathfrak K(A)$ or forming their complete Minkowski
average.

The relevant computational tasks belong to a broad and active research
area on output-sensitive enumeration methods for combinatorial and
polyhedral problems. Representative developments include pivoting and
reverse-search methods for polyhedra and hyperplane arrangements
\cite{Avis1992,Avis1996}, algorithms and complexity results
for Minkowski sums
\cite{GritzmannSturmfels1993,Fukuda2004}, and region-enumeration
procedures in multiparametric programming
\cite{TondelJohansenBemporad2003}. These methods indicate several
possible directions for specialized implementations of
Algorithm~\ref{alg:me}, but developing and comparing such
implementations is beyond the scope of this article.

The computational burden of Algorithm~\ref{alg:me} has two principal
sources: the enumeration of the $\sigma(A)$ maximal elements of
$\mathfrak K(A)$ and the construction of a minimal facet representation
of their Minkowski average. A coarse bound on the size of the latter
output can nevertheless be obtained. Since all representative
polytopes $\Poly(A,b_i)$ share the same facet matrix $A$, every edge
direction is orthogonal to $d-1$ linearly independent rows of $A$.
Hence, the number of possible unoriented edge directions is at most
\[
N_{\mathrm e}
   \ \leq\
   \binom{n}{d-1}.
\]
Moreover, every edge direction of the Minkowski average is inherited
from an edge direction of one of its summands. Since a facet contains
$d-1$ linearly independent edge directions, its normal line is
determined by at most $d-1$ elements of this set. Therefore, the number
$\overline m$ of candidate facet-normal lines, where opposite normals
are identified, satisfies
\[
\overline m
   \ \leq\
   \binom{N_{\mathrm e}}{d-1}
   \ \leq\
   \binom{\binom{n}{d-1}}{d-1}.
\]
Consequently, without identifying opposite facet normals, the number
$m$ of rows of $B$ admits the conservative bound
\[
m
   \ \leq\
   2\binom{\binom{n}{d-1}}{d-1}.
\]
This is only an output-size estimate. In particular, it does not bound
$\sigma(A)$, the cost of enumerating the maximal configuration cones,
or the possible growth of intermediate representations during the
Minkowski-sum computation. It should therefore not be interpreted as
a complete runtime-complexity bound for Algorithm~\ref{alg:me}.

\input{figures/fig5}

\begin{example}[Dodecahedron]

The conservatism of the preceding estimate is illustrated by
Figure~\ref{fig::5}. The original dodecahedron has dimension $d=3$ and
$n=12$ facets. Thus,
\[
N_{\mathrm e}
   \ \leq\
   \binom{12}{2}
   =66,
\qquad
\binom{N_{\mathrm e}}{2}
   \ \leq\
   \binom{66}{2}
   =2145.
\]
Equivalently, the general oriented-normal estimate gives
\[
m \ \leq\ 2\binom{66}{2}=4290.
\]
In contrast, Algorithm~\ref{alg:me} returns a minimal configuration
enclosure with only
\[
m=62
\]
facets. The enclosure consists of $12$ decagonal, $20$ hexagonal, and
$30$ rectangular facets. This example shows that the combinatorial
estimate may overstate the actual output size by several orders of
magnitude.
\end{example}

Once a suitable enclosure has been computed, the combinatorial
construction need not be repeated in subsequent control applications.
The resulting facet and vertex coefficient matrices can be stored and
reused for different systems of the same dimension, or distributed as
precomputed template libraries. The online polytopic optimization
problems then use only the resulting jointly affine facet--vertex
parameterization and do not invoke Algorithm~\ref{alg:me}.

Randomized exploration, symmetry reduction, incremental Minkowski-sum
updates, and partial averaging may all be useful for reducing the
practical computational effort. However, the exact guarantee of
Theorem~\ref{thm::enclosure} relies on accounting for every maximal
element of $\mathfrak K(A)$; averaging only a sampled subset generally
produces an uncertified candidate rather than a configuration
enclosure. Finding efficient, preferably output-sensitive methods for
computing minimal configuration enclosures therefore remains an open
problem and is beyond the scope of the present manuscript.

\subsection{Open Problem: Adversarial Convex Bodies}
\label{subsec:adversarial-body}

Theorem~\ref{thm::dense} establishes the existence of configuration-constrained
Minkowski classes with arbitrarily small approximation error. For a
prescribed class $\mathfrak{P}(A,b)$, its worst-case approximation
error can be defined as
\[
  \gamma(A,b)
  :=
  \sup_{\substack{\mathbb Y\in\mathfrak C^d\\
                  \diam(\mathbb Y)>0}}
  \inf_{c\in\mathbb K(A,b)}
  \frac{d_{\mathrm H}\bigl(\mathbb Y,\Poly(A,c)\bigr)}
       {\diam(\mathbb Y)}.
\]
Since $\mathfrak{P}(A,b)$ is invariant under translations and
positive scalings, the outer supremum may equivalently be restricted
to sets satisfying
\[
  0\in\mathbb Y
  \qquad\text{and}\qquad
  \diam(\mathbb Y)=1.
\]
This normalized family is compact in the Hausdorff metric, while the
distance to the closed class $\mathfrak{P}(A,b)$ is continuous.
Consequently, the supremum is attained. We call any corresponding
maximizer $\mathbb Y^\star\in\mathfrak C^d$ an adversarial convex body for
the class $\mathfrak{P}(A,b)$.

The geometric properties of such adversarial convex bodies remain
largely open. In particular, it is not clear whether an adversarial
convex body can always be chosen to be polyhedral, whether its
dimension or combinatorial complexity can be bounded in terms of
$A$ and $\mathbb K(A,b)$, or how its exposed directions are related to the
prescribed facet configuration. For a fixed $\mathbb Y$, the inner
minimization can be treated using the joint facet--vertex
parameterization~\eqref{eq::vertices}. The principal
difficulty is the global optimization over $\mathbb Y\in\mathfrak C^d$.
Structural characterizations and certified computational methods for
determining $\gamma(A,b)$ would provide a quantitative basis for
comparing different configuration cones and their configuration
enclosures.

\section{Applications of Universal Minkowski Classes}
\label{sec:applications}

Optimization over compact convex sets arises in shape optimization,
robust design, uncertainty quantification, and learning-based safety;
see, for example, the Galerkin framework in~\cite{Rieger2021} and
set-based approaches to safe learning in
\cite{KollerEtAl2018,LiEtAl2024}. The main focus of this article,
however, is set-theoretic control, where polytopes are routinely used
to represent invariant sets, reachable sets, robust tubes, and
polyhedral control Lyapunov functions.

This section explains how the approximation result of
Theorem~\ref{thm::dense} connects with convex computational methods in
control. The key object is a configuration triple $(A,E,V)$, which
provides a finite-dimensional parameterization in which both facets and
vertices depend affinely on the same decision variables. A broader
account of the corresponding control formulations can be found in
\cite{Houska2025,HouskaCLF2026}.

\subsection{Minkowski-Convex Set Optimization}
\label{sec:applications:convexity}

Consider an optimization problem whose decision variable is a compact
convex set,
\begin{equation}
\begin{aligned}
    \underset{\mathbb Y\in\mathfrak C^d}{\operatorname{minimize}}
    \quad & \mathcal G_0(\mathbb Y)\\
    \operatorname{subject\ to}
    \quad & \mathcal G_j(\mathbb Y)\leq 0,
    \qquad j\in\{1,\ldots,q\}.
\end{aligned}
\label{eq::abstract-set-optimization}
\end{equation}
A functional $\mathcal G:\mathfrak C^d\to\mathbb R$ is called
Minkowski-convex if
\begin{equation}
\mathcal G\bigl(
    \theta\mathbb Y_1+(1-\theta)\mathbb Y_2
\bigr)
\leq
\theta\mathcal G(\mathbb Y_1)
+(1-\theta)\mathcal G(\mathbb Y_2)
\label{eq::Minkowski-convex-functional}
\end{equation}
for all $\mathbb Y_1,\mathbb Y_2\in\mathfrak C^d$ and
$\theta\in[0,1]$. Problem~\eqref{eq::abstract-set-optimization} is
therefore a convex optimization problem in the natural linear structure
induced by Minkowski addition and nonnegative set scaling. Several
relevant examples are collected below.

\begin{example}[Polytopic inclusions]\label{ex::inclusion}
For a fixed direction $p\in\mathbb R^d$, the support function
\[
    h_{\mathbb Y}(p)
    \defeq
    \max_{x\in\mathbb Y}p^\top x
\]
is Minkowski-affine, because 
\[
h_{\theta\mathbb Y_1+(1-\theta)\mathbb Y_2}(p)
=
\theta h_{\mathbb Y_1}(p)
+(1-\theta)h_{\mathbb Y_2}(p).
\]
Consequently, maxima of affine expressions involving support functions
are Minkowski-convex. This observation is particularly useful for
representing set-containment constraints. For example, if
\[
    \mathbb X
    =
    \{x\in\mathbb R^d\mid Hx\leq h\},
\]
then the functional
\[
    \mathcal G_{\mathrm{inc}}(\mathbb Y)
    \defeq
    \max_i
    \left\{
        h_{\mathbb Y}(H_i^\top)-h_i
    \right\}
\]
is Minkowski-convex and satisfies
\[
    \mathcal G_{\mathrm{inc}}(\mathbb Y)\leq0
    \quad\Longleftrightarrow\quad
    \mathbb Y\subseteq\mathbb X.
\]
\end{example}

\begin{example}[Volume functionals]\label{ex::volume} A standard objective to maximize the volume of sets in $\mathbb R^d$ is given by
\[
    \mathcal G_{\mathrm{vol}}(\mathbb Y)
    \defeq
    -\operatorname{vol}(\mathbb Y)^{1/d}.
\]
The Brunn--Minkowski inequality implies~\cite{Gardner2002,Schneider2013} 
\[
\operatorname{vol}
\bigl(
    \theta\mathbb Y_1+(1-\theta)\mathbb Y_2
\bigr)^{1/d}
\geq
\theta\operatorname{vol}(\mathbb Y_1)^{1/d}
+(1-\theta)\operatorname{vol}(\mathbb Y_2)^{1/d}.
\]
Thus, $\mathcal G_{\mathrm{vol}}$ is Minkowski-convex. Notice that the negative volume itself is not, in
general, Minkowski-convex when $d>1$.
\end{example}

\begin{example}[Distance functionals]
For a fixed point $x\in\mathbb R^d$, the set-distance functional
\[
    \mathcal G_x(\mathbb Y)
    \defeq
    d(x,\mathbb Y)
    =
    \min_{y\in\mathbb Y}\|x-y\|
\]
is Minkowski-convex. Indeed, choosing
$y_i\in\mathbb Y_i$ as nearest points to $x$ gives
\[
\begin{aligned}
&d\bigl(
    x,\theta\mathbb Y_1+(1-\theta)\mathbb Y_2
\bigr)
\\
&\qquad\leq
\left\|
    \theta(x-y_1)+(1-\theta)(x-y_2)
\right\|
\\
&\qquad\leq
\theta d(x,\mathbb Y_1)
+(1-\theta)d(x,\mathbb Y_2).
\end{aligned}
\]
More generally, for a fixed compact convex set $\mathbb X$, both the
directed set distance
\[
    \max_{x\in\mathbb X}d(x,\mathbb Y)
\]
and the Hausdorff distance
\[
    d_{\mathrm H}(\mathbb X,\mathbb Y)
\]
are Minkowski-convex as functions of $\mathbb Y$. These functionals
provide natural measures of set-inclusion violation and geometric
approximation error; see also \cite{Rieger2021}.
\end{example}

\begin{example}[Invariance constraints]\label{ex::invariance}
The preceding properties can be combined to formulate robust
control-invariance conditions. Consider
\[
    x^+=Fx+Gu+w,
    \qquad
    u\in\mathbb U,\quad w\in\mathbb W,
\]
where $\mathbb U$ and $\mathbb W$ are compact and $\mathbb U$ is
convex. Define
\begin{equation}
\mathcal G_{\mathrm{rci}}(\mathbb Y)
\defeq
\max_{x\in\mathbb Y}
\min_{u\in\mathbb U}
\max_{w\in\mathbb W}
d(Fx+Gu+w,\mathbb Y).
\label{eq::robust-invariance-functional}
\end{equation}
This functional is Minkowski-convex. To see this, let
\[
    \mathbb Y_\theta
    =
    \theta\mathbb Y_1+(1-\theta)\mathbb Y_2
\]
and write an arbitrary $x_\theta\in\mathbb Y_\theta$ as
\[
    x_\theta=\theta x_1+(1-\theta)x_2,
    \qquad x_i\in\mathbb Y_i.
\]
For controls $u_i\in\mathbb U$ that attain, or arbitrarily closely approximate, the inner minima, define
\[
    u_\theta=\theta u_1+(1-\theta)u_2\in\mathbb U.
\]
For every $w\in\mathbb W$, convexity of the distance under simultaneous
Minkowski combinations yields
\begin{align*}
&d\bigl(
    Fx_\theta+Gu_\theta+w,\mathbb Y_\theta
\bigr)
\\
&\quad\leq
\theta
d\bigl(
    Fx_1+Gu_1+w,\mathbb Y_1
\bigr)
\\
&\qquad+
(1-\theta)
d\bigl(
    Fx_2+Gu_2+w,\mathbb Y_2
\bigr).
\end{align*}
Taking successively the maximum over $w$, the minimum over the control, and the maximum over the state gives
\[
\mathcal G_{\mathrm{rci}}(\mathbb Y_\theta)
\leq
\theta\mathcal G_{\mathrm{rci}}(\mathbb Y_1)
+(1-\theta)\mathcal G_{\mathrm{rci}}(\mathbb Y_2).
\]
Moreover, since the distance is nonnegative, it holds that
\[
    \mathcal G_{\mathrm{rci}}(\mathbb Y)\leq0 
    \ \Longleftrightarrow \
    \left\{ 
        \begin{aligned}
            &\forall x\in \mathbb Y, \, \exists u\in\mathbb U: \ \forall w\in\mathbb W\\
            &Fx+Gu+w \in \mathbb Y. 
        \end{aligned}
    \right.
\]
\end{example}

These examples cover the main ingredients of many set-theoretic control
problems: geometric size objectives, approximation errors, state-set
containment, and robust invariance. Thus, such problems can often be
expressed in the Minkowski-convex form
\eqref{eq::abstract-set-optimization}; see
\cite{Villanueva2024,Houska2025} for further examples.
The remaining difficulty is that optimization over $\mathfrak C^d$ is
infinite-dimensional. The following subsection explains how universal
configuration-constrained Minkowski classes provide finite-dimensional
convex discretizations of these problems.

\input{figures/fig9}

\subsection{Restoring Convexity to Polytopic Parameterizations}
\label{sec:applications:parameterizations}

Problem~\eqref{eq::abstract-set-optimization} is
infinite-dimensional. A natural finite-dimensional approximation is
obtained by fixing a facet matrix $A\in\mathbb R^{n\times d}$ and
parameterizing the decision set as $\mathbb Y=\Poly(A,b)$. This yields
\begin{equation}
\begin{aligned}
    \underset{b\in\mathbb R^n}{\operatorname{minimize}}
    \quad & \mathcal G_0(\Poly(A,b))\\
    \operatorname{subject\ to}
    \quad & \mathcal G_j(\Poly(A,b))\leq 0,
    \qquad j\in\{1,\ldots,q\}.
\end{aligned}
\label{eq::abstract-set-optimization2}
\end{equation}
Although finite-dimensional, this problem is generally non-convex.
Indeed, as $b$ varies, the vertices and face-incidence structure of
$\Poly(A,b)$ may change. Functionals involving both its facet and
vertex representations therefore become piecewise-defined or
disjunctive in the parameter $b$.

This difficulty is removed by restricting the parameters to a
configuration cone. Let
\[
    \mathbb K(A,\one)
    =
    \left\{
        b\in\mathbb R^n
        \,\middle|\,
        Eb\leq0
    \right\}.
\]
Such a finite polyhedral representation follows from the configuration-cone construction; see~\cite{Villanueva2024,Houska2025}.
The resulting discretization is
\begin{equation}
\begin{aligned}
    \underset{b\in\mathbb R^n}{\operatorname{minimize}}
    \quad & \mathcal G_0(\Poly(A,b))\\
    \operatorname{subject\ to}
    \quad & \mathcal G_j(\Poly(A,b))\leq 0,
    \qquad j\in\{1,\ldots,q\},\\
    & Eb\leq0.
\end{aligned}
\label{eq::abstract-set-optimization3}
\end{equation}
On $\mathbb K(A,\one)$, the facets and vertices of $\Poly(A,b)$ admit
a joint affine parameterization. More fundamentally, the set-valued
mapping $b\mapsto\Poly(A,b)$ is affine with respect to Minkowski
combinations:
\begin{equation}
\Poly\bigl(A,\theta b+(1-\theta)c\bigr)
=
\theta\Poly(A,b)+(1-\theta)\Poly(A,c)
\label{eq::Minkowski-affine-template}
\end{equation}
for all $b,c\in\mathbb K(A,\one)$ and $\theta\in[0,1]$. Hence, every
Minkowski-convex functional $\mathcal G_j$ induces a convex function
\[
    b\longmapsto\mathcal G_j(\Poly(A,b))
\]
on the configuration cone. Consequently,
Problem~\eqref{eq::abstract-set-optimization3} is a
finite-dimensional convex optimization problem. For many problems in
set-theoretic control, its objective and constraints additionally admit
explicit linear or conic representations.

Theorem~\ref{thm::dense} shows that imposing a configuration constraint
does not create an intrinsic geometric approximation barrier: suitably
chosen configuration-constrained Minkowski classes can approximate
every compact convex set to arbitrary accuracy. Suppose that the functionals $\mathcal G_j$ are Hausdorff-continuous, the feasible sets lie in a common Hausdorff-compact family, an optimal solution exists, and Problem~\eqref{eq::abstract-set-optimization} admits a strictly feasible solution. Minkowski combinations of an optimal and a strictly feasible solution yield strictly feasible sets with objective values arbitrarily close to the optimum. Density and continuity then imply convergence of the optimal values of increasingly accurate universal-template discretizations. Configuration constraints
thus restore convexity without fundamentally sacrificing geometric
expressiveness.

\begin{example}[CI polytopes: convex formulation]\label{ex::RCI}
\label{sec:applications:numerical}

We illustrate the use of configuration templates by computing control
invariant inner approximations of the maximal control invariant set for
the three-dimensional system
\begin{equation*}
    x^+ = Fx + u  , \quad\text{with} \quad F = \begin{pmatrix}
    0&0&1\\
    \cos{(7)}&-\sin{(7)}&0\\
    \sin{(7)}&\cos{(7)}&0
    \end{pmatrix}^3
\label{eq::numerical-dynamics}
\end{equation*}
where the angle is measured in radians. The state and control constraints are given by
\begin{equation*}
    \mathbb X
    =
    \left\{
        x\in\mathbb R^3
        \, \middle|\,
        x\geq0,\, \one^\intercal x\leq1
    \right\} \ \text{and} \     \mathbb U
    =
    \left[-\frac{1}{10},\frac{1}{10}\right]^3,
\end{equation*}
respectively.

The computation of a maximal control invariant set 
can be formulated as a Minkowski-convex
optimization problem over compact convex subsets of $\mathbb X$. In
this formulation, state-set containment and control invariance are
imposed through the functionals introduced in
Examples~\ref{ex::inclusion} and~\ref{ex::invariance}, respectively,
while the objective is chosen as the volume functional 
\begin{equation*}
\mathcal G_{\rm vol}(\mathbb Y)
= -\operatorname{vol}(\mathbb Y)^{1/3}
\end{equation*}
from Example~\ref{ex::volume}.

As described in the preceding section, a finite-dimensional convex
discretization is obtained by restricting the decision set to a
Minkowski class $\mathfrak{P}(C,c)$. Using the equivalences in
Examples~\ref{ex::inclusion} and~\ref{ex::invariance}, with
$\mathbb W=\{0\}$, the feasible region
\begin{equation*}
\left\{ 
\mathbb{Y} \in \mathfrak{P}(C,c) \,\middle|\, 
\begin{aligned}
&\mathcal{G}_{\mathrm{rci}}(\mathbb Y) \leq 0 \\
&\mathcal{G}_{\mathrm{inc}}(\mathbb Y) \leq 0 
\end{aligned}
\right\}
\end{equation*}
corresponds to the finite-dimensional polyhedron
\begin{equation*}
\mathbb F = \left\{ 
b \in \mathbb{R}^{r_C} \,\middle|\, 
\begin{aligned}
&\exists u_1,\ldots,u_{\nv} \in \mathbb U:\ \forall i\in\{1,\ldots,\nv\}, \\
&C(FV_ib + u_i) \leq b ,\\
&\one^\intercal V_ib \leq 1, \, V_ib \geq 0, \\
&Eb \leq 0
\end{aligned}
\right\}.
\end{equation*}
Here, the vertex matrices $V_1,\ldots,V_{\nv}$ provide the joint
facet--vertex representation of the selected configuration class, which 
is represented by the matrix $E$, compare~\cite[Sect.~3.5]{Villanueva2024}. The first constraint enforces the invariance condition at the vertices, which is equivalent to enforcing
$\mathcal G_{\mathrm{rci}}(\Poly(C,b)) \leq 0$, with $\mathbb W = \{ 0 \}$; see also~\cite{Houska2025}. In this context, the constraint $E b \leq 0$
restricts $b$ to the chosen configuration cone, and the constraints on $u_i$ ensure admissibility of the associated vertex controls. Similarly, $V_i b \geq 0$ and $\one^\intercal V_i b \leq 1$ for all vertices are equivalent to $\Poly(C,b) \subseteq \mathbb X$; that is $\mathcal G_{\mathrm{inc}}(\Poly(C,b)) \leq 0$.

Let $\mathcal T$ be a set of index tuples enumerating the vertices of the 
simplices a of a tetrahedralization of a representative polytope in the
relative interior of the configuration cone.
For each index tuple 
$\tau=(i_0,i_1,i_2,i_3)\in\mathcal T$ of a tetrahedron, order its vertices such that
the corresponding determinant is positive at the representative, and
define
\[
D_{\tau,j}:=V_{i_j}-V_{i_0},
\qquad j\in\{1,2,3\}.
\]
Since the labeled face configuration is constant on the relative
interior, the same vertex-index tetrahedralization applies there and
\[
\operatorname{vol}(\mathbb P(C,b))
=
\frac{1}{6}\sum_{\tau\in\mathcal T}
\det \begin{bmatrix}
D_{\tau,1}b & D_{\tau,2}b & D_{\tau,3}b
\end{bmatrix}.
\]
Thus, $\operatorname{vol}(\mathbb P(C,b))$ is a homogeneous cubic
polynomial in $b$, which extends continuously to the boundary of the
configuration cone. Although this polynomial is not concave in
general, its cube root is concave by the Brunn--Minkowski inequality.

Therefore, for each candidate class, the problem
\begin{equation}\label{eq::numerical-template-LP}
\underset{b\in\mathcal F}{\operatorname{maximize}}
\quad
\operatorname{vol}(\mathbb P(C,b))^{1/3}.
\end{equation}
returns a control invariant set with maximal volume. Here, we solve it for each of the eight maximal configuration cones
\[
\mathbb M_q=\mathbb K(A,b_q),
\qquad q\in\{1,\ldots,8\},
\]
in Example~\ref{ex::cutcube} and its configuration enclosure. Consequently, every feasible solution of
\eqref{eq::numerical-template-LP} defines a certified control invariant
inner approximation of the maximal control invariant set. 
Alternative set-size objectives can also be used to select control invariant
sets while preserving a linear-programming formulation; see, e.g.,
\cite[Sections ~3.1 and 3.2]{Houska2025} for examples applied to reachable and positive invariant polytopes. Such objectives may be useful when a computationally
simpler selection criterion is preferred over the volume-based objective.

Figure~\ref{fig:nested-configurations} shows the resulting control
invariant inner approximations. The lighter enclosing polytope is the
maximal control invariant set, while the darker
polytopes are the inner approximations, of maximal volume, obtained from the eight
maximal configuration cones of $A$. The optimizer associated with
\begin{equation*}
\mathbb M_1=\mathbb K(A,b_1)
\end{equation*}
is the top-left polytope in Figure~\ref{fig:nested-configurations} and has seven facets and ten vertices.

In this example, the optimizer obtained from the
configuration-enclosure class happens to
coincide with the optimizer obtained from the first configuration cone
$\mathbb M_1$. Although a generic polytope in the enclosure class has ten
facets, the optimal parameter lies on the boundary of its
configuration cone. At this boundary point, some facet inequalities
become redundant, and the resulting polytope reduces to the
seven-facet configuration associated with $\mathbb M_1$.
The results in Figure~\ref{fig:nested-configurations} show that for this example optimizing over the configuration enclosure yields a control invariant set whose volume is at least as large as that of the polytopes obtained from the basis configurations.

\end{example}

\section{Conclusion}
This paper has established that prescribing a fixed facet--vertex configuration creates no intrinsic geometric approximation barrier. For every dimension $d\geq1$ and every accuracy $\epsilon>0$, Theorem~\ref{thm::dense} provides a configuration-constrained Minkowski class that is $\epsilon$-dense in the compact convex sets. The combinatorial-averaging construction produces such classes through minimal configuration enclosures while retaining the joint affine facet--vertex parameterization required for convex computation. In set-theoretic control, this resolves a geometric question left open by earlier configuration-constrained tube-MPC and control-Lyapunov-function constructions~\cite{Villanueva2024,HouskaCLF2026}: restricting a parametric polytope to one suitably chosen fixed configuration does not inherently sacrifice universal approximation capability. 

The final section showed how invariant-set and related control problems can
be cast as Minkowski-convex set-optimization problems and discretized through
finite-dimensional convex parameterizations by configuration-constrained
polytopes. More broadly, the theoretical results establish that such
parameterizations can be made arbitrarily expressive and open several
directions for further research, including the efficient construction of
universal Minkowski classes, the structural and computational characterization
of adversarial convex bodies, and the output-sensitive enumeration of maximal
configuration cones.

\section*{Acknowledgement}
The authors would like to thank Colin Jones for inspiring discussions during the first author’s research stay at EPFL in 2023. During these discussions, Colin Jones raised the question  whether configuration-constraints fundamentally obstruct the approximation of convex bodies to arbitrary accuracy. This motivated the central problem addressed in this work. Our results confirm that, provided the configuration templates are chosen appropriately, no such obstruction exists.

\bibliographystyle{plain}
\bibliography{bib/references}

\end{document}